\newtheorem{lemma}{Lemma}
\newtheorem{corollary}{Corollary}
\newtheorem{theorem}{Theorem}
\theoremstyle{definition}
\newtheorem{remark}{Remark}
\newtheorem{definition}{Definition}
\title{
Global minimizers for  a two-sided biharmonic 
Alt-Caffarelli 
problem
}
\author{Hans-Christoph Grunau
\thanks{Fakult\"{a}t f\"{u}r Mathematik, 
Otto-von-Guericke-Universit\"{a}t,
Postfach 4120,
39016 Magdeburg, Germany,
E-mail: \texttt{hans-christoph.grunau@ovgu.de}}
\and 
Marius M\"uller
\thanks{Institut f\"ur Mathematik, Universit\"at Augsburg,
	Universit\"atsstra{\ss}e 2,
	86159 Augsburg, Germany,
E-mail: \texttt{marius1.mueller@uni-a.de}
 \\
 \emph{Keywords:} 
Alt-Caffarelli problem, Biharmonic operator, Homogeneous global minimizers, PDEs with distributions which are not measures. 
 \\
 \emph{MSC 2020 Subject classification:} 35R35, 31830 (Primary), 49J52, 35B44 (Secondary)
 }
}
\date{\today}
\begin{document}

\maketitle

\begin{abstract}
    We study global minimizers of biharmonic analogues of the Alt-Caffarelli functional. It turns out that half-space solutions are global minimizers for the two-sided Alt-Caffarelli functional, but not in the one-sided case. In addition, we identify a further class of global minimizers, all of which have constant Laplacian. Recent work by J. Lamboley and M. Nahon reduces potential global minimizers in dimension two to four possible categories.
    Our work shows that three of these categories persist in any dimension and are in fact global minimizers. 
\newline   
Moreover, we show that minimizers of the two-sided biharmonic Alt-Caffarelli problem do in general not satisfy a partial differential equation, not even with a signed measure as right-hand-side. This is in sharp contrast to the corresponding one-sided problem.
\end{abstract}



\section{Introduction}

Let $\Omega \subset \mathbb{R}^n$ be open and bounded. For $u \in H^2(\Omega)$ we define the \emph{two-sided biharmonic Alt-Caffarelli functional}
\begin{equation}\label{eq:bihAC}
    \mathcal{F}(u,\Omega) := \int_\Omega (\Delta u)^2 \; \mathrm{d}x + |\{x \in \Omega : u(x) \neq 0 \}|.
\end{equation}
In the following we will write simply $|\{u \neq 0 \}|$ instead of $|\{ x \in \Omega : u(x) \neq 0 \}|$ if $\Omega$ is clear from the context. Minimizers of the functional have recently been studied in \cite{GMMathAnn} under Dirichlet and Navier boundary conditions. It turned out that for any 
boundary datum $u_0$ and any $C^2$-domain $\Omega \subset\mathbb{R}^n$, minimizers of $\mathcal{F}(\cdot,\Omega)$ lie in $C^{1,\alpha}(\Omega)$ for any $\alpha \in (0,1)$, but in general not in $C^2(\Omega)$.
The question of $C^{1,1}$-regularity remains open, but symmetry results and explicit study of radial minimizers suggest a positive answer. In dimension $n = 2$ the behaviour  of minimizers at the free boundary $\partial (\{u\not=0\}\cup\{\nabla u\not=0\})$ has been addressed in the recent work \cite{Nahon}.

The main goal of this article is to study \emph{global minimizers} of the above functional in arbitrary dimensions. These are defined as follows.
\begin{definition}\label{eq:globalmibn}
    We say that $u \in H^2_{loc}(\mathbb{R}^n)$ \emph{minimizes $\mathcal{F}$ globally
    } 
    if for all bounded open sets $\Omega \subset \mathbb{R}^n$ and all $v \in H^2(\Omega)$ such that $v- u \in H_0^2(\Omega)$ one has $\mathcal{F}(u,\Omega) \leq \mathcal{F}(v,\Omega)$. 
\end{definition}

\noindent
This definition may be equivalently reformulated as follows: 
\begin{quote}
    We say that $u \in H^2_{loc}(\mathbb{R}^n)$ \emph{minimizes $\mathcal{F}$ globally} 
if for all bounded open sets $\Omega \subset \mathbb{R}^n$ and all $v \in  H^2_{loc}(\mathbb{R}^n)$ such that $v= u $ outside $\Omega$ one has $\mathcal{F}(u,\Omega) \leq \mathcal{F}(v,\Omega)$. 		
\end{quote}

We find examples of global minimizers and in particular obtain some 
\emph{two-homogeneous} global 
minimizers. 
Throughout this paper, ``homogeneous'' always means ``positive homogeneous''.
The absence of a maximum principle imposes a challenge in this context, especially since uniqueness and symmetry results are not readily available. Examples of global minimizers that we find fall under two categories
\begin{itemize}
    \item[(I)] Half-space minimizers of the form $u(x) = \pm \frac{1}{2}((x-x_0) \cdot \nu)^2 \chi_{(0,\infty)}((x-x_0) \cdot \nu)$ for some unit vector $\nu \in \mathbb{R}^n$ and $x_0 \in \mathbb{R}^n$.
    See Theorem~\ref{thm:Main} in Section~\ref{sec:halfplane}. 
    \item[(II)] Functions $u$ with constant Laplacian, i.e $\Delta u= c$ for some $c \in \mathbb{R}$. However, global minimality of such functions depends on the value of the constant $c$. The condition $|c| \geq 1$ ensures global minimality, see Theorem~\ref{lem:globmin} in Section~\ref{sec:constLaplacian}.  
    For other constants, global minimiality is in general not  
    true, as we shall show in Lemma~\ref{lem:lemma8}. 
\end{itemize}
Both categories contain two-homogeneous functions and hence give rise to two-homogeneous global minimizers. The above list is not necessarily an exhaustive list of global minimizers. The question whether more global minimizers exist arises naturally. To this end one may see e.g. the recent work \cite{Nahon} by Lamboley and Nahon, where a potential ``angular minimizer'' shows up in dimension $n=2$.

	The present work also pursues two ideas how the above list could be extended. Firstly, we investigate whether the condition $|c| \geq 1$ in Type (II) can be relaxed or removed, revealing that it can not be removed completely. Secondly, we show that the prefactor  $\pm \frac{1}{2}$ in Type (I) cannot be replaced by any other number. To this end note that in the special case $\nu= e_n := (0,0,...,1), x_0=0$ in Type-(I) minimizers take the form $u(x)= \pm \frac{1}{2} x_n^2 \chi_{(0,\infty)}(x_n)$. Inspired by this structure we compute all $d_1,d_2 \in \mathbb{R},d_1\neq d_2$ such that the functions 
\begin{equation}\label{eq:2A}
    w_{d_1,d_2}: \mathbb{R} \to \mathbb{R}, \quad w_{d_1,d_2}(x) :=d_1 \tfrac{x_n^2}{2} \chi_{(0,\infty)}(x_n) + d_2 \tfrac{x_n^2}{2} \chi_{(-\infty,0)}(x_n)
\end{equation}
yield global minimizers and find that necessarily $d_1= \pm 1,d_2=0$ or $d_1=0,d_2= \pm1$ (cf. Section \ref{sec:wd1d2}). Notice that there we also study the omitted case $d_1=d_2$, but minimizers that arise in this case would fall under Category (II) rather than Category (I).

 Explicit knowledge of global minimizers provides
 more insights into the 
 \emph{blow-up limits} of the minimizers found in \cite{GMMathAnn} around their free boundary (despite the fact that the absence of a maximum principle makes a rigorous result in this situation 
 difficult). Such blow-up limits are used in the classical Alt-Caffarelli problem (see e.g. \cite[Theorem 1.2 and the outline of its proof]{Velichkov})
 to study regularity of the free boundary. We will elaborate on this connection more precisely below (cf. Section \ref{subsec:appl}).   
 
The lecture notes  \cite{Velichkov} provide a broad survey on the  regularity theory of free boundaries of the classical Alt-Caffarelli problem \cite{AltCaffarelli}. 
One studies for a given smooth domain $G \subset \mathbb{R}^n$ minimizers of 
\begin{equation}\label{eq:ACclassical}
    \mathcal{F}_{AC}(u) := \int_G |\nabla u|^2 \; \mathrm{d}x + |\{ x \in G:  u(x) \neq 0 \}| \qquad \textrm{among all $u \in W^{1,2}(G): u \vert_{\partial G} = 1$}
\end{equation}
and seeks to understand properties of 
the free boundary $\partial \{u \neq 0 \}$. Notice that due to the maximum principle, minimizers of $\mathcal{F}_{AC}$ satisfy $u \geq 0$ and hence one may replace $\{ u \neq 0\}$ by $\{ u> 0\}$. For the higher order functional in \eqref{eq:bihAC}, replacing $\{ u \neq 0\}$ by $\{ u> 0\}$ makes a significant difference, as we shall see below. This is why we distinguish between the \emph{two-sided problem}  \eqref{eq:bihAC} and the \emph{one-sided problem}, where  the functional 
\begin{equation*}
    \tilde{\mathcal{F}}(u,\Omega)= \int_\Omega (\Delta u)^2 \; \mathrm{d}x + | \{x \in \Omega : u(x) > 0 \}|. 
\end{equation*}
is investigated. 
Our study here is focussed on global minimizers of 
the two-sided problem, but we also prove in Lemma \ref{lem:onesided} that these are not global minimizers of the one-sided problem.

Recently, the functionals $\mathcal{F}$ and $\tilde{\mathcal{F}}$ 
have raised a lot of interest. In \cite{DKV19,DKV20}, the one-sided problem is
studied in arbitrary dimensions with Navier boundary conditions. The authors show existence and first regularity results for minimizers and give a dichotomy result for free boundary points. Not only the pure one-sided functional is studied, but also the one-sided functional with the additional obstacle constraint $u \geq 0$. Notice that under such an 
obstacle constraint, the one-sided and two-sided functionals coincide. Such \emph{adhesive obstacle problems} have been studied before in one dimension for curves \cite{Miura1, Miura2} and are of interest in many biological applications, such as \emph{gecko adhesion}, cf. \cite{PerssonGorb}. 

 For the one-sided functional without the obstacle constraint it has been shown in \cite{MuellerAMPA,MuellerPolyharmonic} that in two dimensions minimizers of the one-sided problem enjoy optimal $C^{2,1}$-regularity in two dimensions. The same regularity result applies to the free boundary $\partial \{ u \neq 0 \}$ since it is observed that the gradient of minimizers can not have zeroes on the free boundary (which intertwines the regularity of minimizers and free boundary). These observations do not change if one replaces the Laplacian by an anisotropic elliptic operator of the form $\mathrm{div}(A\nabla)$, as \cite{MuellerAnisotropic} shows. In \cite{GMMathAnn}, the two-sided problem is studied for the first time. The different gradient behaviour yields a different optimal regularity result (under Navier boundary conditions): $C^2$-regularity is impossible. The origin of this discrepancy is also discussed in this article: In contrast to the one-sided problem, minimizers of the two-sided functional may in general not satisfy any measure-valued PDE, not even with a signed measure. This is also a remarkable difference to the two-sided biharmonic obstacle problem \cite{TwoSidedObstacle}, where a measure-valued PDE is indeed satisfied with a signed measure. Notice that for one-sided problems, measure-valued PDEs arise naturally from variational inequalities, as e.g. observed in the classical biharmonic obstacle problem \cite{CaffarelliFriedman}.

\subsection{Application: Blow-Up-Analysis}\label{subsec:appl}

To understand the regularity of the free boundaries of minimizers $\tilde{u}\in W^{1,2}(G)$ of \eqref{eq:ACclassical}, a classical approach discussed in \cite{Velichkov} works via a \emph{blow-up analysis}. More precisely, for some point $x_0$ on the free boundary $\partial \{ \tilde{u} \neq 0 \}$ one forms 
\begin{equation}
    \tilde{u}_{x_0,r}(x) := \frac{\tilde{u}(x_0 + r x)}{r}
\end{equation}
and studies 
the limit as $r \searrow 0$. 
Limits of these functions have (if existent in a suitable sense) naturally the following properties:
\begin{itemize}
    \item They are global minimizers of the classical Alt-Caffarelli functional and 
    \item they are one-homogeneous functions. 
\end{itemize}
Regularity results of the free boundary are then obtained by studying the zero level set of such one-homogeneous global minimizers. In dimension two and assuming $x_0=0$ 
 nonnegative one-homogeneous global minimizers are exhaustively classified by half-plane solutions of the form 
\begin{equation*}
    \tilde{u}_\nu(x) := (x \cdot \nu) \chi_{(0,\infty)}(x\cdot \nu),
\end{equation*}
where $\nu \in \mathbb{R}^2$ is an arbitrary unit vector (cf. \cite[Proposition 9.13]{Velichkov}). An inspection of this proof reveals that if one drops the prerequisite of nonnegativity, also affine linear functions can be one-homogeneous global minimizers of $\mathcal{F}_{AC}$.     
The free boundary  $\partial\{\tilde{u}_\nu>0\}$ is always a straight line 
of the form $\{x \in \mathbb{R}^2: x \cdot \nu = 0 \}$ i.e. a smooth one-dimensional manifold. This smoothness of the free boundary of blow-ups is decisive for the regularity of the free boundary of the original problem. The same approach works also in higher dimensions, and it turns out that the regularity of the free boundary of one-homogeneous global minimizers is decisive for regularity results. This is why \cite{Velichkov} defines the critical dimension $d^*$ of the Alt-Caffarelli problem as follows
\begin{definition}[{\cite[Definition 1.5]{Velichkov}}]\label{eq:defcriticaldim} $d^*$ is the smallest dimension such that there exists a function $z: \mathbb{R}^{d^*} \rightarrow \mathbb{R}$ such that 
\begin{itemize}
    \item[(i)] $z$ is nontrivial, non-negative and one-homogeneous,
    \item[(ii)] $z$ is a global minimizer of the classical Alt-Caffarelli functional,
    \item[(iii)] the 
    free boundary $\partial \{ z > 0 \}$ is not a $(d^*-1)$-dimensional $C^1$-submanifold of $\mathbb{R}^{d^*}$. 
\end{itemize}
\end{definition}
It is now known that $d^* \in \{ 5,6 \}$ (cf. \cite{Velichkov,dstar1,dstar2,dstar3}). 
It seems meaningful to think about a generalization of this approach to the biharmonic Alt-Caffarelli problem for $\mathcal{F}(\cdot, \Omega)$. Notice first that if $x_0$ is a free boundary point, then in view of Lemma  \ref{leminteriorregularity} one has $u(x_0)=0, \nabla u(x_0)= 0$. Hence it seems more meaningful to study the following blow-up of a minimizer $u$
\begin{equation}
    u_{x_0,r}(x) := \frac{u(x_0+ rx)}{r^2}. 
\end{equation}
Due to the lack of the maximum principle for a higher order problem it is generally not easy to show that $u_{x_0,r}$ possesses a limit as $r \searrow 0$. In dimension $n= 2$ an existence result for limits has been achieved by J. Lamboley and M. Nahon in \cite{Nahon}, by means of a monotonicity formula based on the one in \cite{DKV20}. 
The authors use this to describe the free boundary and to classify its potential singular points, if $n=2$.
In higher dimensions the existence question of such limits seems challenging and goes beyond the scope of this article. However, if existent, such limits are naturally two-homogeneous.


Moreover, cut-and-paste-arguments suggest again to expect that blow-up limits are global minimizers of $\mathcal{F}$. Observe that the condition $v-u \in H_0^2(\Omega)$ in Definition \ref{eq:globalmibn} appears here naturally, since it is needed for cut-and-paste procedures. As a result, blow-up limits are indeed expected to be two-homogeneous global minimizers.

Definition \ref{eq:defcriticaldim} only admits nonnegative functions, which is  not meaningful in the biharmonic context due to the lack of a maximum principle.  
We leave out this nonnegativity requirement but remark at this point that 
 the decisive counterexample is still nonnegative, cf. Corollary \ref{cor:dstarstar}.

Based on the reasoning above, one may define
the critical dimension $d^{**}$ for the biharmonic Alt-Caffarelli problem as follows. 

\begin{definition}\label{eq:defcriticaldimbih} $d^{**}$ is the smallest dimension such that there exists a function $z: \mathbb{R}^{d^{**}} \rightarrow \mathbb{R}$ such that 
\begin{itemize}
    \item[(i)] $z$ is nontrivial and two-homogeneous,
    \item[(ii)] $z$ is a global minimizer of $\mathcal{F}$,
    \item[(iii)] the 
    free boundary $\partial \{ z = 0, \nabla z = 0 \}$ is not a $(d^{**}-1)$-dimensional $C^1$-submanifold of $\mathbb{R}^{d^{**}}$. 
\end{itemize}
\end{definition}

We will study the value of $d^{**}$ and find in Corollary \ref{cor:dstarstar} that $d^{**} = 2.$ The reason for this is readily understood: The class of Type-(II)-minimizers contains many quadratic forms and the free boundaries of quadratic forms (in the sense of Definition \ref{eq:defcriticaldimbih}(iii)) can be more complicated than just a $C^1$-hypersurface. For example they can consist only of a singleton or of a subspace whose codimension is higher than one. 
An interesting question for future research is whether even more complicated free boundaries than singletons or lower-dimensional subspaces can arise as free boundaries of two-homogeneous global minimizers. It is conjectured that even in dimension two they do arise. Indeed, in \cite[Eq. (1.8)]{Nahon} the authors find a candidate for a two-homogeneous global minimizer whose free boundary consists of a cone, whose opening angle $t_1$ is the unique solution of $\tan(t_1)=t_1$ in $(\pi,2\pi)$. Its global minimality is conjectured and not proven so far.


As a consequence, the study of the regularity of the free boundary via blow-up in the biharmonic setting is more involved than  in the first order 
 setting. This is not just due to unsettled convergence questions in dimension $n \geq 3 $ (e.g. since homogeneity results of blow-ups are more involved than the celebrated approach in \cite{Weiss})  but also due to different qualitative behaviour of blow-up limits.  

\subsection{Structure of the article}

In Section \ref{sec:Prelim} we discuss some properties of global minimizers. 
In Section \ref{sec:halfplane} we prove global minimality of half-plane minimizers (Type I) and discuss why this global minimality is not true for the one-sided functional. In Section \ref{sec:constLaplacian} we prove the existence of global minimizers with constant Laplacian (Type II) and discuss which values of the Laplacian ensure global minimality.  In Section \ref{sec:wd1d2} we examine global minimality of a class of functions that generalizes Type I and Type II in order to initiate a discussion about how exhaustive our obtained list  of global minimizers is. 
Finally, we discuss in Section \ref{sec:NoMeasValuedInequality} that 
minimizers do in general \emph{not} satisfy a partial differential equation with  measure-valued right-hand side.

\section{Preliminaries}\label{sec:Prelim}

In the sequel we fix some notation and collect some basic facts about global minimizers.

Throughout the article, for any set $E \subset \mathbb{R}^n$, $\chi_E$ denotes the characteristic function of $E$ and $e_1,...,e_n$ denote the canonical basis vectors in $\mathbb{R}^n$. Moreover $B_R(0)$ denotes the ball in $\mathbb{R}^n$ with radius $R$ centered at zero.

\subsection{Properties of global minimizers}

In the following we will frequently use a regularity result for minimizers of $\mathcal{F}(\cdot ,\Omega)$ from \cite[Theorem 2]{GMMathAnn}.

\begin{lemma}[{\cite[Theorem 2]{GMMathAnn}}]\label{leminteriorregularity}
    Let $\Omega \subset \mathbb{R}^n$ be open and $u_0 \in C^\infty(\Omega)$. Further let $u \in H^2(\Omega)$ be a minimizer of $\mathcal{F}(\cdot, \Omega)$ in $\mathscr{D}(u_0):= \{ v \in H^2(\Omega) : v -u_0 \in H_0^2(\Omega) \}.$ Then $u \in C^{1,\alpha}(\Omega)$ and $u$ is smooth and biharmonic on $\Omega \setminus \{ u = 0, \nabla u = 0 \}$. 
\end{lemma}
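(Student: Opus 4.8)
The plan is to establish the two assertions in turn: first interior $C^{1,\alpha}$-regularity by a Campanato-type iteration built on biharmonic comparison, and then---as a comparatively easy consequence---smoothness and biharmonicity of $u$ on the open set $\Omega\setminus\{u=0,\nabla u=0\}$.

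For the regularity claim, fix a ball $B_r(x_0)\Subset\Omega$ and let $h\in H^2(B_r(x_0))$ be the \emph{biharmonic replacement} of $u$, i.e.\ the unique minimizer of $w\mapsto\int_{B_r(x_0)}(\Delta w)^2$ among $w$ with $w-u\in H^2_0(B_r(x_0))$; equivalently $\Delta^2 h=0$ weakly, so that $\Delta h$ is harmonic. Extending $h-u$ by zero shows that $v:=h$ on $B_r(x_0)$, $v:=u$ elsewhere, lies in $\mathscr{D}(u_0)$ and differs from $u$ only inside $B_r(x_0)$. Minimality $\mathcal{F}(u,\Omega)\le\mathcal{F}(v,\Omega)$, the trivial bound $|\{h\ne 0\}\cap B_r(x_0)|\le|B_r(x_0)|$, and the orthogonality identity $\int_{B_r(x_0)}(\Delta u)^2=\int_{B_r(x_0)}(\Delta h)^2+\int_{B_r(x_0)}(\Delta(u-h))^2$ (which uses $u-h\in H^2_0$ and $\Delta^2 h=0$) then yield the key estimate
\[
\int_{B_r(x_0)}(\Delta(u-h))^2\,\mathrm{d}x\le|B_r(x_0)|=\omega_n r^n .
\]
Feeding this, together with the classical decay estimate $\int_{B_\rho}|\Delta h-(\Delta h)_{B_\rho}|^2\le C(\rho/r)^{n+2}\int_{B_r}|\Delta h-(\Delta h)_{B_r}|^2$ for harmonic $\Delta h$, into the usual Campanato scheme shows that $\phi(\rho):=\int_{B_\rho(x_0)}|\Delta u-(\Delta u)_{B_\rho(x_0)}|^2$ obeys $\phi(\rho)\le C(\rho/r)^{n+2}\phi(r)+Cr^n$ for $\rho\le r$. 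The standard iteration lemma gives $\phi(\rho)\le C\rho^n$ locally, i.e.\ $\Delta u\in\mathrm{BMO}_{\mathrm{loc}}(\Omega)$; by John--Nirenberg $\Delta u\in L^p_{\mathrm{loc}}(\Omega)$ for every $p<\infty$, hence $u\in W^{2,p}_{\mathrm{loc}}(\Omega)$ for every $p<\infty$ by interior Calderón--Zygmund estimates, and finally $u\in C^{1,\alpha}_{\mathrm{loc}}(\Omega)$ for every $\alpha\in(0,1)$ by Morrey's embedding.

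With $u\in C^{1,\alpha}$ in hand, $N:=\{u=0,\nabla u=0\}$ is relatively closed in $\Omega$, so $\Omega\setminus N$ is open. Given $x_0\in\Omega\setminus N$, choose a ball $B=B_\rho(x_0)\Subset\Omega$ on which either $|u|\ge c$ or $|\nabla u|\ge c$ for some $c>0$ (possible by continuity of $u$ and $\nabla u$). For $\varphi\in C^\infty_c(B)$ and $|t|$ small the measure term does not change: if $|u|\ge c$ on $\overline B$ then $u+t\varphi\ne 0$ on $B$; if $|\nabla u|\ge c$ on $\overline B$ then $|\nabla(u+t\varphi)|>0$ on $B$, so $\{u+t\varphi=0\}\cap B$ is a $C^1$-hypersurface and hence Lebesgue-null; in both cases $|\{u+t\varphi\ne 0\}|=|\{u\ne 0\}|$. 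Since $v:=u+t\varphi\in\mathscr{D}(u_0)$, minimality reduces to $\int_B(\Delta u+t\Delta\varphi)^2\ge\int_B(\Delta u)^2$ for all small $t$ of either sign, hence $\int_B\Delta u\,\Delta\varphi=0$ for every $\varphi\in C^\infty_c(B)$. Thus $\Delta^2 u=0$ weakly on $B$, and ellipticity of $\Delta^2$ upgrades $u$ to a smooth biharmonic function there; as $x_0$ was arbitrary, $u$ is smooth and biharmonic on $\Omega\setminus N$.

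The main obstacle is the Campanato iteration: the measure term contributes only the borderline error $O(r^n)$, which is exactly what forces the conclusion $\Delta u\in\mathrm{BMO}_{\mathrm{loc}}$ (and \emph{not} $\Delta u\in L^\infty_{\mathrm{loc}}$, which would give the $C^{1,1}$-regularity that is known to be open), so the error term has to be combined carefully with the harmonic decay estimate and the iteration lemma. The remaining ingredients---John--Nirenberg, interior $L^p$-theory, Morrey embedding, and the local constancy of the measure term---are routine, and the hypothesis $u_0\in C^\infty(\Omega)$ plays no role for this interior statement.
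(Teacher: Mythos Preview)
The paper does not prove this lemma at all: it is stated as a quotation of \cite[Theorem 2]{GMMathAnn} and used as a black box, so there is no ``paper's own proof'' to compare against. Your argument is therefore not a reconstruction of something in the present paper but a self-contained sketch of the cited result.

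That said, your sketch is sound. The biharmonic-replacement estimate $\int_{B_r}(\Delta(u-h))^2\le |B_r|$ is exactly the right starting point for this class of problems; combined with the decay of oscillations for the harmonic function $\Delta h$ and the standard Campanato iteration lemma (with exponents $n+2>n$), it yields $\Delta u\in \mathrm{BMO}_{\mathrm{loc}}$, from which $u\in W^{2,p}_{\mathrm{loc}}$ for every $p<\infty$ and hence $u\in C^{1,\alpha}_{\mathrm{loc}}$ follow by John--Nirenberg, Calder\'on--Zygmund and Morrey. Your treatment of the second assertion is also correct: once $u\in C^1$, near a point with $u\neq 0$ or $\nabla u\neq 0$ the measure term is locally constant under perturbations $u+t\varphi$ (in the second case because the zero set is a $C^1$-hypersurface and hence Lebesgue-null), so the first variation vanishes and weak biharmonicity plus elliptic regularity give smoothness. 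Your closing observation that the error term $O(r^n)$ is precisely borderline, yielding $\mathrm{BMO}$ for $\Delta u$ but not $L^\infty$, is consistent with the paper's remark that $C^{1,1}$-regularity remains open.
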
 
Note in particular that regularity can only break down at points where $u = 0$ and $\nabla u = 0$, i.e. non-regular points of the zero level set. As a consequence it seems more suitable to think of the free boundary as $\partial \{u = 0, \nabla u = 0 \}$ and not as $\partial \{ u = 0 \}$. 
Next we deal with symmetries of global minimizers. To this end we first observe the following symmetries of the functional $\mathcal{F}(\cdot, \Omega)$, which can be verified by direct computation:
\begin{itemize}
    \item For each $u \in H^2(\Omega)$ one has $\mathcal{F}(-u,\Omega)= \mathcal{F}(u,\Omega)$.
    \item  For each $u \in H^2(\Omega)$ and $O \in O(n,\mathbb{R})$ one has $\mathcal{F}(u \circ O,\Omega)= \mathcal{F}(u,\Omega)$.
\end{itemize}

\begin{lemma}\label{lem:invariance}
    Let $u \in H^2_{loc}(\mathbb{R}^n)$ be a global minimizer of $\mathcal{F}$. Then it holds:
    \begin{itemize}
        \item[(a)] $-u \in H^2_{loc}(\mathbb{R}^n)$ is also a global minimizer of $\mathcal{F}$.
        \item[(b)] For each $O \in O_n(\mathbb{R})$ the function $u \circ O \in H^2_{loc}(\mathbb{R}^n)$ is also global minimizer of $\mathcal{F}$.
        \item[(c)] For each $x_0 \in \mathbb{R}^n$, $u(\, \cdot\, + x_0)$ is also a global minimizer of $\mathcal{F}$.
    \end{itemize}
\end{lemma}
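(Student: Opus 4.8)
The plan is to reduce all three assertions to the two functional symmetries of $\mathcal{F}(\cdot,\Omega)$ recorded just before the statement, together with the elementary fact that precomposition with an affine isometry is a bijection of the relevant Sobolev spaces.

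Part (a) needs no change of domain. Given a bounded open $\Omega$ and $v \in H^2(\Omega)$ with $v - (-u) \in H_0^2(\Omega)$, note that $(-v) - u = -\bigl(v - (-u)\bigr) \in H_0^2(\Omega)$, so global minimality of $u$ gives $\mathcal{F}(u,\Omega) \le \mathcal{F}(-v,\Omega)$; combining this with $\mathcal{F}(-u,\Omega) = \mathcal{F}(u,\Omega)$ and $\mathcal{F}(-v,\Omega) = \mathcal{F}(v,\Omega)$ yields $\mathcal{F}(-u,\Omega) \le \mathcal{F}(v,\Omega)$. Since $-u \in H^2_{loc}(\mathbb{R}^n)$ trivially, this is the claim.

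For (b) and (c) I would treat both isometries at once, writing $T$ for either $O \in O_n(\mathbb{R})$ or the translation $(\,\cdot\, + x_0)$. First I record a moving-domain version of the functional identities: since $T$ is measure preserving and its linear part is orthogonal, one has $\Delta(w\circ T) = (\Delta w)\circ T$, and the change of variables $y = Tx$ gives
\[
  \mathcal{F}(w\circ T,\Omega) = \mathcal{F}(w, T\Omega)\qquad\text{for every bounded open }\Omega\text{ and }w\in H^2(T\Omega).
\]
Next I would check that $w\mapsto w\circ T$ is a bijection $H^2(T\Omega)\to H^2(\Omega)$ restricting to a bijection $H_0^2(T\Omega)\to H_0^2(\Omega)$; this is routine since $T$ and $T^{-1}$ are affine with constant Jacobian, so the chain rule gives two-sided bounds between the $H^2$-norms and $C_c^\infty(T\Omega)\circ T = C_c^\infty(\Omega)$. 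Now fix a bounded open $\Omega$ and $v\in H^2(\Omega)$ with $v - u\circ T \in H_0^2(\Omega)$, and set $\tilde\Omega := T\Omega$ and $\tilde v := v\circ T^{-1}\in H^2(\tilde\Omega)$. Then $\tilde v - u = (v - u\circ T)\circ T^{-1}\in H_0^2(\tilde\Omega)$, so global minimality of $u$ on $\tilde\Omega$ together with the displayed identity applied twice gives
\[
  \mathcal{F}(u\circ T,\Omega) = \mathcal{F}(u,\tilde\Omega) \le \mathcal{F}(\tilde v,\tilde\Omega) = \mathcal{F}(\tilde v\circ T,\Omega) = \mathcal{F}(v,\Omega),
\]
which is precisely global minimality of $u\circ T$ (and $u\circ T\in H^2_{loc}(\mathbb{R}^n)$ because $T$ is a global diffeomorphism). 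Choosing $T = O$ proves (b) and $T = (\,\cdot\,+x_0)$ proves (c).

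I do not expect a genuine obstacle here; the only point requiring a little care is the Sobolev-space bijection, namely that precomposition with an affine isometry carries $H_0^2$ of a domain onto $H_0^2$ of the image domain (and $H^2$ onto $H^2$), which is exactly where the condition ``$v - u \in H_0^2(\Omega)$'' from Definition~\ref{eq:globalmibn} enters in an essential way.
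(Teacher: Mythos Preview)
Your proof is correct and follows essentially the same approach as the paper: part (a) is argued identically, and for (b) and (c) the paper simply says they ``follow along the same lines,'' which is precisely the affine-isometry argument you spell out in detail. Your treatment of (b) and (c) via the moving-domain identity $\mathcal{F}(w\circ T,\Omega)=\mathcal{F}(w,T\Omega)$ is the natural elaboration of what the paper leaves implicit.
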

\begin{proof}
    We only prove (a). Assertions  (b) and (c) follow along the same lines. Suppose that $\Omega \subset \mathbb{R}^n$ 
    is open and bounded and $v \in H^2(\Omega)$ is such that $v- (-u) \in H_0^2(\Omega)$. One readily checks that then $-v \in H^2(\Omega)$ and $(-v)-u \in H_0^2(\Omega)$. Since $u$ is a global minimizer this implies $\mathcal{F}(-v, \Omega)\geq \mathcal{F}(u,\Omega)$. As a result we have
   \begin{equation*}
     \mathcal{F}(v,\Omega)=   \mathcal{F}(-v, \Omega) \geq \mathcal{F}(u,\Omega) = \mathcal{F}(-u,\Omega). 
   \end{equation*}
\end{proof}
These symmetry results allow us to reduce the functions in Type I to one \emph{prototype}. Indeed by a sign change we may assume that $\pm = +$ and by a rotation we may achieve $\nu = e_n$. As a result each function in Type I coincides up to rotation, translation,
and sign change with 
\begin{equation}\label{eq:2}
    u : \mathbb{R}^n \rightarrow \mathbb{R}, \quad u(x):= \begin{cases}
        \frac{1}{2}x_n^2 & x_n > 0,\\ 0 & x_n \leq 0.
    \end{cases}
\end{equation}
Hence  it is sufficient to show that this function yields a global minimizer of $\mathcal{F}.$ We next simplify the problem further. In Definition \ref{eq:globalmibn}, minimality must be checked for all bounded open sets $\Omega \subset \mathbb{R}^n$. We claim next, that it is sufficient to check minimality in the special case that $\Omega$ is a ball centered in the origin. In order to show this we need the following extension lemma. 

\begin{lemma}\label{lem:3}
    Suppose that $\Omega, \tilde{\Omega} \subset \mathbb{R}^n$ are bounded open sets such that $\tilde{\Omega} \supset \Omega$. Let $u \in H^2(\tilde{\Omega})$ and $v \in H^2(\Omega)$ be such that $v- u \in H_0^2(\Omega)$. Define $\tilde{v} : \tilde{\Omega} \rightarrow \mathbb{R}$ via 
    \begin{equation*}
        \tilde{v}(x) = \begin{cases}
             v(x) & x \in \Omega \\ u(x) & x \in \tilde{\Omega} \setminus \Omega
        \end{cases}
    \end{equation*}
    Then $\tilde{v} \in H^2(\tilde{\Omega})$. Moreover $\tilde{v}- u \in H_0^2(\tilde{\Omega})$. 
\end{lemma}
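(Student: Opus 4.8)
The plan is to realize $\tilde v$ explicitly as $u$ plus the extension by zero of $v-u$, and then to observe that this zero–extension still belongs to $H^2_0(\tilde\Omega)$ by a density argument. First I would record the following standard fact: if $g \in H^2_0(\Omega)$, let $\hat g \colon \tilde\Omega \to \mathbb{R}$ denote its extension by zero, i.e. $\hat g = g$ on $\Omega$ and $\hat g = 0$ on $\tilde\Omega\setminus\Omega$. Then $\hat g \in H^2_0(\tilde\Omega)$ and $\|\hat g\|_{H^2(\tilde\Omega)} = \|g\|_{H^2(\Omega)}$.

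To prove this fact, I would pick $\varphi_k \in C_c^\infty(\Omega)$ with $\varphi_k \to g$ in $H^2(\Omega)$. Since each $\varphi_k$ has compact support contained in $\Omega \subset \tilde\Omega$, its zero–extension $\hat\varphi_k$ lies in $C_c^\infty(\tilde\Omega)$, and all of its (classical, hence weak) derivatives up to order two are the zero–extensions of the corresponding derivatives of $\varphi_k$. Consequently $\|\hat\varphi_k\|_{H^2(\tilde\Omega)} = \|\varphi_k\|_{H^2(\Omega)}$ and $\|\hat\varphi_k - \hat\varphi_\ell\|_{H^2(\tilde\Omega)} = \|\varphi_k - \varphi_\ell\|_{H^2(\Omega)}$, so $(\hat\varphi_k)_k$ is Cauchy in $H^2(\tilde\Omega)$ and converges to some $h \in H^2_0(\tilde\Omega)$. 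Restricting the convergence to $\Omega$ yields $h = g$ a.e. on $\Omega$, while on $\tilde\Omega\setminus\Omega$ every $\hat\varphi_k$ vanishes, so $h = 0$ a.e. there; hence $h = \hat g$, which gives both the claimed membership and the isometry.

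With this in hand, I would set $g := v - u \in H^2_0(\Omega)$ and define $W := u + \hat g$ on $\tilde\Omega$. Since $u \in H^2(\tilde\Omega)$ and $\hat g \in H^2_0(\tilde\Omega) \subset H^2(\tilde\Omega)$, we get $W \in H^2(\tilde\Omega)$ and $W - u = \hat g \in H^2_0(\tilde\Omega)$. It then remains only to identify $W$ with $\tilde v$ almost everywhere: on $\Omega$ one has $\hat g = g = v-u$, hence $W = u + (v-u) = v = \tilde v$; on $\tilde\Omega\setminus\Omega$ one has $\hat g = 0$, hence $W = u = \tilde v$. Therefore $\tilde v = W \in H^2(\tilde\Omega)$ and $\tilde v - u \in H^2_0(\tilde\Omega)$, which is the assertion.

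I do not expect a genuine obstacle in this argument; the only point that deserves a little care is the bookkeeping for the zero–extension — namely that the zero–extension of a $C_c^\infty(\Omega)$ function genuinely lies in $C_c^\infty(\tilde\Omega)$ (this uses $\Omega \subset \tilde\Omega$ together with compactness of the support) and that extension by zero commutes with differentiation on such functions, so that the zero–extension operator is an isometry $H^2_0(\Omega) \hookrightarrow H^2_0(\tilde\Omega)$. Everything else is a direct a.e.\ comparison.
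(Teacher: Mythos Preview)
Your argument is correct and is exactly the approach the paper has in mind: the paper's proof is the single line ``immediate, observing that $C_0^\infty(\Omega)\subset H_0^2(\Omega)$ is dense,'' and you have simply written out the details of that density argument (zero-extension of approximating $C_c^\infty$ functions, Cauchy in $H^2(\tilde\Omega)$, identification of the limit with $\hat g$).
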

\begin{proof} The proof is immediate, observing that $C_0^\infty(\Omega)\subset H_0^2(\Omega)$ is dense.
\end{proof}

Now we claim that minimality in balls centered at zero is sufficient for global minimality. 

\begin{lemma}\label{lem:BR(0)istgenug}
Suppose that $u \in H^2_{loc}(\mathbb{R}^n)$ is such that for all $R> 0$ the following is satisfied: For each $v \in H^2(B_R(0))$ such that $v-u \in H_0^2(B_R(0))$ one has $\mathcal{F}(v,B_R(0)) \geq \mathcal{F}(u,B_R(0)).$ Then $u$ is a global minimizer of $\mathcal{F}$.  
\end{lemma}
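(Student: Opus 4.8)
The plan is to reduce global minimality on an arbitrary bounded open set $\Omega$ to minimality on a large ball $B_R(0)$ that contains $\overline{\Omega}$, using the extension lemma (Lemma~\ref{lem:3}) to glue competitors together. So let $\Omega \subset \mathbb{R}^n$ be bounded and open, and let $v \in H^2(\Omega)$ with $v - u \in H_0^2(\Omega)$. First I would choose $R > 0$ large enough that $\overline{\Omega} \subset B_R(0)$; since $\Omega$ is bounded such an $R$ exists. Now I would apply Lemma~\ref{lem:3} with $\tilde{\Omega} = B_R(0)$ (note $u \in H^2_{loc}(\mathbb{R}^n) \subset H^2(B_R(0))$ since $B_R(0)$ is bounded) to produce the glued competitor
\begin{equation*}
    \tilde{v}(x) = \begin{cases} v(x) & x \in \Omega,\\ u(x) & x \in B_R(0) \setminus \Omega, \end{cases}
\end{equation*}
which by that lemma satisfies $\tilde{v} \in H^2(B_R(0))$ and $\tilde{v} - u \in H_0^2(B_R(0))$. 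The hypothesis applied with this $\tilde v$ then gives $\mathcal{F}(\tilde{v}, B_R(0)) \geq \mathcal{F}(u, B_R(0))$.

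The second step is to rewrite both sides of this inequality by splitting the integral and the measure term over the disjoint decomposition $B_R(0) = \Omega \cup (B_R(0) \setminus \Omega)$. Since $\tilde v = u$ on $B_R(0) \setminus \Omega$, both the Dirichlet-type term $\int (\Delta \cdot)^2$ and the Lebesgue measure term $|\{\cdot \neq 0\}|$ agree for $\tilde v$ and $u$ on $B_R(0) \setminus \Omega$; hence
\begin{equation*}
    \mathcal{F}(\tilde v, B_R(0)) = \mathcal{F}(v, \Omega) + \mathcal{F}(u, B_R(0) \setminus \Omega), \qquad \mathcal{F}(u, B_R(0)) = \mathcal{F}(u, \Omega) + \mathcal{F}(u, B_R(0) \setminus \Omega).
\end{equation*}
Here I am using $\tilde v|_\Omega = v$ for the first identity. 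Subtracting the common finite quantity $\mathcal{F}(u, B_R(0) \setminus \Omega)$ from the inequality $\mathcal{F}(\tilde v, B_R(0)) \geq \mathcal{F}(u, B_R(0))$ then yields exactly $\mathcal{F}(v, \Omega) \geq \mathcal{F}(u, \Omega)$, which is the desired global minimality. One should make sure $\mathcal{F}(u, B_R(0)\setminus\Omega)$ is finite so that the cancellation is legitimate: this holds because $u \in H^2(B_R(0))$ gives $\int_{B_R(0)}(\Delta u)^2 < \infty$ and the measure term is bounded by $|B_R(0)| < \infty$.

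There is no genuinely hard step here; the only points requiring a little care are purely bookkeeping. One is verifying that the restriction and extension operations are compatible, i.e.\ that "$\tilde v = v$ on $\Omega$" and "$\tilde v = u$ off $\Omega$" are literal equalities of $H^2$-functions so that the functional splits cleanly over the partition — this is immediate from the definition of $\tilde v$ in Lemma~\ref{lem:3}. The other is the finiteness remark above, needed to justify subtracting $\mathcal{F}(u, B_R(0) \setminus \Omega)$ from both sides without running into $\infty - \infty$. If one wished to avoid even that, one could instead argue directly with the inequality $\mathcal{F}(\tilde v, B_R(0)) \geq \mathcal{F}(u, B_R(0))$ written as a sum and use that $\mathcal{F}(v,\Omega), \mathcal{F}(u,\Omega) < \infty$ (again from boundedness of $\Omega$ and $v, u \in H^2$). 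Either way the proof is short.
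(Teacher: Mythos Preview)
Your proof is correct and follows essentially the same route as the paper: choose a large ball containing $\Omega$, extend the competitor by $u$ via Lemma~\ref{lem:3}, apply the hypothesis on the ball, and cancel the common contribution on $B_R(0)\setminus\Omega$. The only difference is cosmetic---you add an explicit finiteness check for the cancellation step, which the paper leaves implicit.
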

\begin{proof}
    Let $\Omega \subset \mathbb{R}^n$ be open and bounded. Then there exists $R> 0$ such that $\Omega \subset B_R(0)$. Now let $v \in H^2(\Omega)$ be such that $v- u \in H_0^2(\Omega)$.
    We can define $\tilde{v}: B_R(0) \rightarrow \mathbb{R}$ to be 
    \begin{equation*}
        \tilde{v}(x) = \begin{cases}
             v(x) & x \in \Omega \\ u(x) & x \in B_R(0) \setminus \Omega
        \end{cases}
    \end{equation*}
   and conclude by Lemma \ref{lem:3} that $\tilde{v} \in H^2(B_R(0))$ and $\tilde{v} - u  \in H_0^2(B_R(0))$. By assumption we conclude 
   \begin{equation}\label{eq:22}
       \mathcal{F}(\tilde{v}, B_R(0)) - \mathcal{F}(u, B_R(0)) \geq 0.
   \end{equation}
   Since $\tilde{v}= u$ on $ B_R(0) \setminus \Omega$ we have $\mathcal{F}(\tilde{v}, B_R(0) \setminus \Omega) = \mathcal{F}(u, B_R(0) \setminus \Omega)$ and as a consequence
   \begin{equation*}
        \mathcal{F}(\tilde{v}, B_R(0)) - \mathcal{F}(u, B_R(0)) = \mathcal{F}(v, \Omega) - \mathcal{F}(u, \Omega).
   \end{equation*}
   This and \eqref{eq:22} yield $\mathcal{F}(v, \Omega) \geq \mathcal{F}(u, \Omega)$. The claim follows. 
\end{proof}

\section{Type I: Half-space minimizers}\label{sec:halfplane}

The main result of this section as follows.

\begin{theorem}\label{thm:Main}
    Let $u \in H^2_{loc}(\mathbb{R}^n)$ be defined as in \eqref{eq:2}:
    $$
     u : \mathbb{R}^n \rightarrow \mathbb{R}, \quad u(x):= \begin{cases}
    	\frac{1}{2}x_n^2 & x_n > 0,\\ 0 & x_n \leq 0.
    	\end{cases}
    	$$
     Then $u$ minimizes $\mathcal{F}$ globally. 
\end{theorem}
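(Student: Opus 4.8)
The plan is to invoke Lemma~\ref{lem:BR(0)istgenug} and reduce global minimality to minimality on balls $B_R(0)$, and then to exploit that the volume term and the Dirichlet-type term of $\mathcal{F}$ are balanced against each other by a Cauchy--Schwarz argument. First I would record the value of the functional on the prototype: $u$ is $C^{1,1}$ with $\Delta u = \chi_{\{x_n>0\}}$ almost everywhere and $\{u\neq 0\} = \{x_n>0\}$, so by the reflection symmetry of $B_R(0)$ across $\{x_n=0\}$ one gets
\begin{equation*}
    \mathcal{F}(u,B_R(0)) = |B_R(0)\cap\{x_n>0\}| + |B_R(0)\cap\{x_n>0\}| = |B_R(0)|.
\end{equation*}

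Now fix $R>0$ and a competitor $v\in H^2(B_R(0))$ with $v-u\in H_0^2(B_R(0))$. The key structural input is that $\Delta v = 0$ almost everywhere on $\{v=0\}$. This follows from the classical fact that the weak gradient of a Sobolev function vanishes a.e.\ on each of its level sets: applied to $v$ it gives $\nabla v = 0$ a.e.\ on $\{v=0\}$, and applied to each component of $\nabla v\in H^1(B_R(0))$ it then yields $D^2 v = 0$, hence $\Delta v = 0$, a.e.\ on $\{v=0\}$. Writing $A:=\{v\neq 0\}\cap B_R(0)$ we therefore have $\int_{B_R(0)}(\Delta v)^2\,\mathrm dx = \int_A (\Delta v)^2\,\mathrm dx$. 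The second ingredient is a normalization of the mean of $\Delta v$: since $v-u\in H_0^2(B_R(0))$, approximating by $C_0^\infty(B_R(0))$ functions and integrating by parts gives $\int_{B_R(0)}\Delta v\,\mathrm dx = \int_{B_R(0)}\Delta u\,\mathrm dx = |B_R(0)\cap\{x_n>0\}| = \tfrac12|B_R(0)|$, and since $\Delta v=0$ a.e.\ off $A$ this means $\int_A \Delta v\,\mathrm dx = \tfrac12|B_R(0)|$ (which in particular forces $|A|>0$).

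Combining these, Cauchy--Schwarz on $A$ gives $\int_A(\Delta v)^2\,\mathrm dx \ge \bigl(\int_A\Delta v\,\mathrm dx\bigr)^2/|A| = |B_R(0)|^2/(4|A|)$, whence
\begin{equation*}
    \mathcal{F}(v,B_R(0)) \;\ge\; \frac{|B_R(0)|^2}{4|A|} + |A| \;\ge\; |B_R(0)| \;=\; \mathcal{F}(u,B_R(0)),
\end{equation*}
the last step being AM--GM, or equivalently $\tfrac{|B_R(0)|^2}{4|A|}+|A|-|B_R(0)| = \tfrac{(|B_R(0)|-2|A|)^2}{4|A|}\ge 0$. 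By Lemma~\ref{lem:BR(0)istgenug} this proves the theorem, and note that equality holds precisely when $|A| = \tfrac12|B_R(0)|$ and $\Delta v$ is constant on $A$, consistent with $v=u$.

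I expect the only delicate point to be the claim $\Delta v = 0$ a.e.\ on $\{v=0\}$ for a \emph{general} $H^2$ competitor: no regularity of the set $\{v=0\}$ is available, which is why one applies the level-set lemma twice, once to $v$ and once to $\nabla v$. Everything else is routine: the divergence theorem for $H^2$ functions with vanishing trace and normal trace, Cauchy--Schwarz, and a one-variable convexity estimate.
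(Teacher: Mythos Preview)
Your proof is correct and takes a genuinely different route from the paper. The paper decomposes $B$ into the half-balls $B^\pm = B\cap\{\pm x_n>0\}$, expands $(\Delta v)^2-(\Delta u)^2$ on $B^+$ to isolate a boundary term $-2\int_{\{x_n=0\}}\partial_n v$, and then proves a separate estimate (Lemma~\ref{lem:1}) bounding $\mathcal{F}(v,B^-)\ge 2\int_{\{x_n=0\}}\partial_n v$ via the pointwise inequality $\Delta v\le\tfrac12(1+(\Delta v)^2)$; the boundary terms cancel and leave the nonnegative remainder $\int_{B^+\cap\{v\neq0\}}(\Delta(v-u))^2$. You bypass the half-ball split entirely: from $v-u\in H_0^2(B)$ you extract only the scalar constraint $\int_B\Delta v=\tfrac12|B|$ and combine Cauchy--Schwarz with AM--GM on $\{v\neq0\}$. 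Your argument is shorter and could in fact be compressed further to the one-line pointwise bound $(\Delta v-1)^2\ge0$ integrated over $\{v\neq0\}$, which gives $\mathcal F(v,B)\ge 2\int_B\Delta v = |B|$ directly. What the paper's approach buys in exchange is the explicit interface term on $\{x_n=0\}$ and the specific remainder, which feed into Lemma~\ref{lem:aSerrinTypeprobl} to obtain \emph{uniqueness} of the minimizer on $B$; your equality discussion recovers the same equation $\Delta v=\chi_{\{v\neq0\}}$ but (as you note) stops short of concluding $v=u$.
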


Recall that by Lemma \ref{lem:BR(0)istgenug} we only have to study minimality inside balls centered at zero. This will be the goal of the following subsection.
\subsection{Proof of Minimality}
 Let $B \subset \mathbb{R}^n$ be any ball centered at $0$. We define $B^+:= B \cap \{x_n > 0 \}$, $B^- := B \cap \{x_n < 0 \}.$ Before we start the proof we need several preparatory lemmas. The first one here is a higher order version of an inequality in \cite[Proof of Lemma 2.13]{Velichkov}. We note that \cite[Lemma 2.13]{Velichkov} uses the maximum principle, which is why it is not straightforward to generalize it to our higher order setting.
\begin{lemma}\label{lem:1}
    Let $v \in H^{2}(B)$ be such that $v = 0$ and $\nabla v = 0$ $\mathcal{H}^{n-1}$ a.e. on $\partial B \cap \{ x_n < 0 \}$. Then 
    \begin{equation}\label{eq:3}
        2   \int_{B \cap \{x_n = 0 \}} \partial_n v(x',0) \; \mathrm{d}x'  \leq  \mathcal{F}(v,B^-).
    \end{equation}
    Equality holds if and only if $\Delta v \vert_{B^-} = \chi_{\{v \neq 0 \} \cap B^- }$ a.e..
\end{lemma}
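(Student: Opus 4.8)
The plan is to reduce the inequality \eqref{eq:3} to an elementary pointwise estimate after applying the Gauss--Green formula on the half-ball $B^-$.

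\emph{Step 1: rewrite the left-hand side via the divergence theorem.} Since $v\in H^2(B)$, the vector field $\nabla v$ lies in $H^1(B^-;\mathbb{R}^n)$, and $B^-$ is a bounded Lipschitz domain, so $\int_{B^-}\Delta v \; \mathrm{d}x=\int_{\partial B^-}\nabla v\cdot\nu\;\mathrm{d}\mathcal{H}^{n-1}$. The boundary $\partial B^-$ decomposes into the spherical part $\partial B\cap\{x_n<0\}$, where $\nabla v=0$ $\mathcal{H}^{n-1}$-a.e. by hypothesis, and the flat part $B\cap\{x_n=0\}$, on which the outer unit normal of $B^-$ is $+e_n$, hence $\nabla v\cdot\nu=\partial_n v(x',0)$. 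Therefore the surface integral collapses and the left-hand side of \eqref{eq:3} equals $2\int_{B^-}\Delta v\;\mathrm{d}x$.

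\emph{Step 2: localize to $\{v\neq0\}$.} For a Sobolev function the second-order derivatives vanish a.e.\ on the set where the first-order derivatives vanish. Applying the classical fact that $\nabla w=0$ a.e.\ on $\{w=c\}$ first to $v$ (giving $\nabla v=0$ a.e.\ on $\{v=0\}$) and then to each $\partial_iv$ yields $D^2v=0$, in particular $\Delta v=0$, a.e.\ on $\{v=0\}\cap B^-$. Consequently
\[
2\int_{B^-}\Delta v\;\mathrm{d}x=2\int_{\{v\neq0\}\cap B^-}\Delta v\;\mathrm{d}x,\qquad \int_{B^-}(\Delta v)^2\;\mathrm{d}x=\int_{\{v\neq0\}\cap B^-}(\Delta v)^2\;\mathrm{d}x .
\]
Now apply the elementary inequality $2t\le t^2+1$, valid for all $t\in\mathbb{R}$ with equality iff $t=1$, pointwise with $t=\Delta v(x)$ on $\{v\neq0\}\cap B^-$ and integrate:
\[
2\int_{\{v\neq0\}\cap B^-}\Delta v\;\mathrm{d}x\le\int_{\{v\neq0\}\cap B^-}(\Delta v)^2\;\mathrm{d}x+|\{v\neq0\}\cap B^-|=\mathcal{F}(v,B^-),
\]
which together with Step~1 is exactly \eqref{eq:3}.

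\emph{Step 3: equality.} Equality forces $2\Delta v=(\Delta v)^2+1$ a.e.\ on $\{v\neq0\}\cap B^-$, i.e.\ $\Delta v=1$ there; combined with $\Delta v=0$ a.e.\ on $\{v=0\}\cap B^-$ from Step~2 this is precisely $\Delta v\vert_{B^-}=\chi_{\{v\neq0\}\cap B^-}$ a.e., and conversely that identity turns every estimate above into an equality. The only slightly delicate points are the trace bookkeeping on $\partial B^-$ — matching the hypothesis (formulated for traces from $B$) with traces taken from $B^-$ and correctly identifying the outer normal on the flat face — and the a.e.\ vanishing of $\Delta v$ on $\{v=0\}$; both are standard. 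I expect Step~1 (the rigorous Gauss--Green argument on the Lipschitz domain $B^-$ with the trace hypotheses) to be the main technical point to get right.
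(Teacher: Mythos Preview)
Your proof is correct and follows essentially the same route as the paper: apply the divergence theorem on $B^-$ to convert the flat boundary integral into $\int_{B^-}\Delta v$, use that $\Delta v=0$ a.e.\ on $\{v=0\}$ to restrict to $\{v\neq0\}\cap B^-$, and then invoke the pointwise inequality $2t\le t^2+1$ (the paper writes it as $t\le\frac{1+t^2}{2}$) together with its equality case. The only cosmetic difference is that the paper cites \cite[Lemma~4]{GMMathAnn} for the vanishing of $\Delta v$ on $\{v=0\}$, whereas you sketch the standard two-step Stampacchia argument yourself.
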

\begin{proof}
Using that $\partial B^- = (B \cap \{x_n = 0 \}) \cup (\partial B \cap \{ x_n < 0 \})$ and $\nabla v = 0$ $\mathcal{H}^{n-1}$ almost everywhere  on  $\partial B \cap \{ x_n < 0 \}$ we find that
    \begin{align*}
          \int_{B \cap \{x_n = 0 \}} \partial_n v(x',0) \; \mathrm{d}x' =  \int_{\partial B^-} (\nabla v \cdot \nu ) \; \mathrm{d}\mathcal{H}^{n-1}.
    \end{align*}
     Applying the Gauss divergence theorem yields
    \begin{align*}
        \int_{B \cap \{x_n = 0 \}} \partial_n v(x',0) \; \mathrm{d}x' = \int_{B^-} \Delta v(x) \; \mathrm{d}x = \int_{\{ v \neq 0 \} \cap B^- } \Delta v(x) \; \mathrm{d}x,
    \end{align*}
    where we used that by \cite[Lemma 4]{GMMathAnn} $\Delta v = 0$ a.e. on $\{ v =  0 \}$. Using that $\Delta v(x) \leq \frac{1 + (\Delta v(x))^2}{2}$ we estimate
    \begin{align}
       &    \int_{B \cap \{x_n = 0 \}} \partial_n v(x',0) \; \mathrm{d}x'   =  \int_{\{ v \neq 0 \} \cap B^- } \Delta v(x) \; \mathrm{d}x   \leq  \int_{\{ v \neq 0 \} \cap B^-} \frac{1 + (\Delta v(x))^2}{2}\; \mathrm{d}x  \label{eq:firstestimate} \\ & = \frac{1}{2} |\{ v \neq 0 \} \cap B^- | +  \frac{1}{2} \int_{\{v \neq 0 \} \cap B^- } |\Delta v(x)|^2 \; \mathrm{d}x \leq \frac{1}{2} \mathcal{F}(v,B^-). \nonumber 
    \end{align}
    Inequality \ref{eq:3} follows. It remains to study the equality case. Notice that the estimate $\Delta v(x) \leq \frac{1 + (\Delta v(x))^2}{2}$ is satisfied with equality if and only if $\Delta v(x) = 1$. Equality in the estimate \eqref{eq:firstestimate} would therefore imply that $\Delta v = 1$ a.e. on $\{ v \neq 0 \} \cap B^-$. Since (again by \cite[Lemma 4]{GMMathAnn}) $\Delta v = 0$ a.e. on $\{ v = 0 \}$ we obtain that $\Delta v \vert_{B^-} = \chi_{B^- \cap \{ v \neq 0 \}}.$ 
\end{proof}

\begin{lemma}
\label{lem:uminimmalinball}    Let $B \subset \mathbb{R}^n$ be a ball centered at zero and  $u \in H^2_{loc}(\mathbb{R}^n)$ be defined as in \eqref{eq:2}. Then for all $v \in H^2(B)$ such that $v- u \in H_0^2(B)$ one has $\mathcal{F}(u,B) \leq \mathcal{F}(v,B)$. Equality holds iff $\Delta v = \chi_{\{v \neq 0 \} }$ a.e. on $B$. 
\end{lemma}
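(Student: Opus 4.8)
The plan is to split the ball along the hyperplane $\{x_n=0\}$ and estimate $\mathcal{F}(v,B^+)$ and $\mathcal{F}(v,B^-)$ separately, arranging that a common boundary flux through $B\cap\{x_n=0\}$ cancels in the sum. First I would record that $\{x_n=0\}$ is Lebesgue-null, so $\mathcal{F}(w,B)=\mathcal{F}(w,B^+)+\mathcal{F}(w,B^-)$ for every $w$, and compute $\mathcal{F}(u,B)=\mathcal{F}(u,B^+)=2|B^+|$, since $\Delta u\equiv 1$ and $u>0$ on $B^+$ while $u\equiv 0$ on $B^-$. Note also that $v-u\in H_0^2(B)$ together with $u\equiv 0$, $\nabla u\equiv 0$ on $\{x_n\leq 0\}$ forces $v=0$ and $\nabla v=0$ $\mathcal{H}^{n-1}$-a.e.\ on $\partial B\cap\{x_n<0\}$, and $\nabla(v-u)=0$ $\mathcal{H}^{n-1}$-a.e.\ on $\partial B\cap\{x_n>0\}$.

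On $B^-$ this is exactly the hypothesis of Lemma~\ref{lem:1}, so $2\int_{B\cap\{x_n=0\}}\partial_n v(x',0)\,\mathrm{d}x'\leq\mathcal{F}(v,B^-)$, with equality iff $\Delta v|_{B^-}=\chi_{\{v\neq 0\}\cap B^-}$ a.e. On $B^+$ I would argue directly: by \cite[Lemma 4]{GMMathAnn}, $\Delta v=0$ a.e.\ on $\{v=0\}$, so $\mathcal{F}(v,B^+)=\int_{\{v\neq 0\}\cap B^+}\big[(\Delta v)^2+1\big]\,\mathrm{d}x$; using $(\Delta v-1)^2\geq 0$ and then re-extending the integral to all of $B^+$ (the integrand $2\Delta v$ vanishes a.e.\ on $\{v=0\}$), this gives $\mathcal{F}(v,B^+)\geq 2\int_{B^+}\Delta v\,\mathrm{d}x$. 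Finally the Gauss--Green formula on the bounded Lipschitz domain $B^+$, combined with $\nabla(v-u)=0$ on $\partial B\cap\{x_n>0\}$ and the vanishing of the $B^+$-trace of $\partial_n u$ on $\{x_n=0\}$ (since $\partial_n u(x',x_n)=x_n$), evaluates the flux as $\int_{B^+}\Delta v\,\mathrm{d}x=|B^+|-\int_{B\cap\{x_n=0\}}\partial_n v(x',0)\,\mathrm{d}x'$. Adding the two estimates, the hyperplane integrals cancel and $\mathcal{F}(v,B)\geq 2|B^+|=\mathcal{F}(u,B)$.

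For the equality statement: if $\mathcal{F}(v,B)=\mathcal{F}(u,B)$, then, since both partial estimates point in the direction ``$\geq$'' and their right-hand sides sum to $2|B^+|$, each must be an equality; from the $B^-$ part this gives $\Delta v|_{B^-}=\chi_{\{v\neq 0\}\cap B^-}$, and equality in $(\Delta v-1)^2\geq 0$ gives $\Delta v=1$ a.e.\ on $\{v\neq 0\}\cap B^+$, which together with $\Delta v=0$ a.e.\ on $\{v=0\}$ yields $\Delta v=\chi_{\{v\neq 0\}}$ a.e.\ on $B$. The converse is immediate: if $\Delta v=\chi_{\{v\neq 0\}}$ then every inequality above is an equality (consistently, $u$ itself satisfies $\Delta u=\chi_{\{u\neq 0\}}$).

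The step I expect to be most delicate is the flux computation on $B^+$: one must ensure that the trace of $\nabla v$ on the interior hyperplane $B\cap\{x_n=0\}$ is the well-defined single-valued trace (legitimate since $v\in H^2(B)$, hence $\nabla v\in H^1(B)$), that it is the same quantity appearing in Lemma~\ref{lem:1}, and that the corner of $B^+$ along $\partial B\cap\{x_n=0\}$ does not obstruct Gauss--Green (it does not, $B^+$ being Lipschitz). The conceptual point --- and the reason the half-space profile is optimal --- is that on $B^+$ the competitor $u$ realizes equality in $(\Delta v-1)^2\geq 0$ while contributing nothing on $B^-$, and the two boundary fluxes enter with opposite signs.
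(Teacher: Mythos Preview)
Your proof is correct and follows essentially the same approach as the paper: split along $\{x_n=0\}$, invoke Lemma~\ref{lem:1} on $B^-$, use the pointwise inequality $(\Delta v-1)^2\geq 0$ on $B^+\cap\{v\neq 0\}$, and let the hyperplane fluxes cancel. The only cosmetic difference is that the paper organizes the $B^+$ computation around the expansion $(\Delta v)^2=(\Delta(v-u))^2+2\Delta u\,\Delta v-(\Delta u)^2$, which---since $\Delta u\equiv 1$ on $B^+$---is literally your inequality $(\Delta v-1)^2\geq 0$ in different clothing; your version has the pleasant feature of mirroring the proof of Lemma~\ref{lem:1} symmetrically on both half-balls.
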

\begin{proof}
    Let $v \in H^2(B)$ be such that $v- u \in H_0^2(B)$. 
    Using that $u \equiv 0$ on $B^-$ we find that
    \begin{align}
        \mathcal{F}(v,B) - \mathcal{F}(u,B) =  \mathcal{F}(v,B^+) + \mathcal{F}(v,B^-) - \mathcal{F}(u,B^+). \label{eq:6}
    \end{align}
    Now we compute 
    \begin{align}
        \mathcal{F}(v,B^+) - \mathcal{F}(u,B^+) & = \int_{B^+} (\Delta v)^2 \; \mathrm{d}x - \int_{B^+} (\Delta u)^2 \; \mathrm{d}x + |\{v \neq 0 \} \cap B^+| - |B^+|  \nonumber
        \\ & = \int_{B^+} (\Delta v)^2 \; \mathrm{d}x - \int_{B^+} (\Delta u)^2 \; \mathrm{d}x - |\{ v = 0 \} \cap B^+|  \nonumber
        \\ & =\int_{B^+} [(\Delta (v -u) )^2 + 2 \Delta u \Delta v - (\Delta u)^2] \; \mathrm{d}x - \int_{B^+} (\Delta u)^2 \; \mathrm{d}x - |\{ v = 0 \} \cap B^+| \nonumber
        \\ & = \int_{B^+}(\Delta (v -u) )^2  \; \mathrm{d}x + 2 \int_{B^+} \Delta u \Delta v \; \mathrm{d}x - 2 \int_{B^+} (\Delta u)^2 \; \mathrm{d}x  - |\{ v  = 0 \} \cap B^+|  \nonumber
        \\ &= \int_{B^+}(\Delta (v -u) )^2  \; \mathrm{d}x +  2 \int_{B^+} \Delta u \Delta (v-u)  \; \mathrm{d}x - | \{ v = 0 \} \cap B^+|. \label{eq:11}
    \end{align}
    Using that $\Delta u = 1$ in $B^+$  and $\nabla u = 0$ on $\{x_n = 0 \}$ yields 
    \begin{align}
         2 \int_{B^+} \Delta u \Delta (v-u)  \; \mathrm{d}x  & = 2\int_{B^+} \Delta (v-u) = 2\int_{\partial B^+} (\nabla (v-u) \cdot \nu) \; \mathrm{d}\mathcal{H}^{n-1} \nonumber \\ &  = 2\int_{B \cap \{ x_n = 0 \}} (-\partial_n) (v-u) \; \mathrm{d}\mathcal{H}^{n-1} = - 2 \int_{B \cap \{ x_n = 0 \}} \partial_n v(x',0) \; \mathrm{d}x'  \label{eq:13} 
    \end{align}
    Furthermore, 
    exploiting again \cite[Lemma 4]{GMMathAnn} and making use of $\Delta u = 1$ on $B^+$, we obtain:
    \begin{align}
        \int_{B^+} (\Delta (v-u))^2 \; \mathrm{d}x & = \int_{B^+ \cap \{ v \neq 0 \} } (\Delta (v - u))^2 \; \mathrm{d}x + \int_{B^+ \cap \{ v =0 \} } (\Delta (v - u))^2 \; \mathrm{d}x \nonumber
       \\ &  =  \int_{B^+ \cap \{ v \neq 0 \} } (\Delta (v - u))^2 \; \mathrm{d}x  + \int_{B^+ \cap \{ v =0 \} } (\Delta u )^2 \; \mathrm{d}x \nonumber
       \\ & =  \int_{B^+ \cap \{ v \neq 0 \} } (\Delta (v - u))^2 \; \mathrm{d}x  + \int_{B^+ \cap \{ v =0 \} } 1 \; \mathrm{d}x \nonumber
       \\ &=  \int_{B^+ \cap \{ v \neq 0 \} } (\Delta (v - u))^2 \; \mathrm{d}x  + |B^+ \cap \{ v = 0 \}|.  \label{eq:16}
    \end{align}
    With the observations of \eqref{eq:13} and \eqref{eq:16} in \eqref{eq:11} we find 
    \begin{align*}
        \mathcal{F}(v,B^+) - \mathcal{F}(u,B^+) = \int_{B^+\cap \{ v \neq 0 \}} (\Delta (v-u))^2 \; \mathrm{d}x  - 2 \int_{B \cap \{x_n = 0 \}} \partial_n v(x',0) \; \mathrm{d}x'. 
    \end{align*}
    Plugging this in \eqref{eq:6} we obtain
    \begin{equation*}
        \mathcal{F}(v,B) - \mathcal{F}(u,B)= \int_{B^+\cap \{ v \neq 0 \}} (\Delta (v-u))^2 \; \mathrm{d}x  - 2 \int_{B \cap \{x_n = 0 \}} \partial_n v(x',0) \; \mathrm{d}x' + \mathcal{F}(v,B^-). 
    \end{equation*}
    Lemma \ref{lem:1} implies that $\mathcal{F}(v,B^-) \geq 2  \int_{B \cap \{x_n = 0 \}} \partial_n v(x',0) \; \mathrm{d}x' $. Therefore,
    \begin{equation}\label{eq:14}
        \mathcal{F}(v,B) - \mathcal{F}(u,B) \geq  \int_{B^+ \cap \{ v \neq 0 \} } (\Delta (v- u))^2 \; \mathrm{d}x \geq 0.
    \end{equation}
    The minimality of $u$ is shown. Finally we turn to the equality case. If \eqref{eq:14} holds with equality it follows that $\Delta (v-u) =0$ a.e. on $B^+ \cap \{ v \neq 0 \}$. Since $\Delta u = 1$ on $B^+$ this implies $\Delta v = 1$ a.e. on $B^+ \cap \{ v \neq 0 \}$. Also recall that we have used the estimate from Lemma \ref{lem:1}. If this estimate holds with equality, Lemma \ref{lem:1} yields that $\Delta v = 1$ a.e. on $B^- \cap \{ v \neq 0 \}$. We conclude that $\Delta v = 1$ a.e. on $\{ v \neq 0 \}$. The claim follows.
\end{proof}

\begin{proof}[Proof of Theorem \ref{thm:Main}] Follows from Lemma \ref{lem:uminimmalinball}  and Lemma \ref{lem:BR(0)istgenug} as a direct consequence.  \end{proof}

We can actually improve Lemma~\ref{lem:uminimmalinball}: Our function $u$ given by \ref{eq:2} is actually \emph{the only} minimizer with its boundary conditions on $\partial B$. This is remarkable since minimizers of Alt-Caffarelli problems are in general not unique, see e.g. \cite[Remark 2]{GMMathAnn}. 

\if 0 
\begin{lemma}
    Suppose that $v \in W^{2,2}(B)$ satisfies $v \geq 0$ and 
    \begin{equation}
        \begin{cases}
            \Delta v = \chi_{\{ v \neq 0 \} }  & \textrm{in $B$} \\ v-u \in H_0^2(B)
        \end{cases}
    \end{equation}
    Then $v \equiv u$ (in particular $ \{ v \neq 0 \} = B^+$)
\end{lemma}
\begin{proof}
    Now notice that $u$ also solves $\Delta u = \chi_{\{ u \neq 0 \}}$. In particular we have that $(v-u) \in H_0^2(B)$ and $v-u$ solves
    \begin{equation}
        \Delta v - \Delta u = (\chi_{\{v \neq 0 \}} - \chi_{\{ u \neq 0 \} } ) = (1 - \chi_{\{v = 0 \}}) - (1-\chi_{\{u =  0 \}}) = \chi_{\{ u  =0 \}} - \chi_{\{ v = 0 \}} .
    \end{equation}
    Since $(v-u) \in H_0^2(B)$ we may compute the following expression and integrate by parts without boundary terms
    \begin{align}
        -\int_\Omega | \nabla (v-u) |^2 \; \mathrm{d}x  & =  \int_\Omega \Delta (v-u) \cdot (v-u) \; \mathrm{d}x = \int_\Omega  (\chi_{\{ u = 0 \} } - \chi_{\{ v = 0 \} }) (v-u)  \\ & =  \int_{\{ u = 0 \} } v \; \mathrm{d}x + \int_{\{v = 0 \}}  u \; \mathrm{d}x 
    \end{align}
    Note that the right hand side is nonnegative as we assumed $v \geq 0$. Hence also the left hand side must be nonnegative. In particular, this yields $\nabla (v-u) \equiv 0$. Since $v-u \in H_0^2(B) \subset H_0^1(B)$ we obtain $v \equiv u$. The claim follows. 
\end{proof}

\fi 

\begin{lemma}\label{lem:aSerrinTypeprobl}
    Let $u \in H^2_{loc}(\mathbb{R}^n)$ be defined as in \eqref{eq:2} and suppose that $v \in H^2(B)$ satisfies 
    \begin{equation*}
        \begin{cases}
            \Delta v = \chi_{\{ v \neq 0 \} }  & \textrm{a.e. in $B$}, \\ v-u \in H_0^2(B).
        \end{cases}
    \end{equation*}
    Then $v = u$ in $B$. 
\end{lemma}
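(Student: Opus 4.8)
The plan is to reduce to the case in which $v$ is \emph{nonnegative} --- in that case the argument just sketched for the nonnegative case (one integration by parts) already gives $v=u$ --- and to obtain the nonnegativity of $v$ from a symmetrization competitor.

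\textbf{Step 1 (set-up).} By the equality statement in Lemma~\ref{lem:uminimmalinball}, the hypothesis $\Delta v=\chi_{\{v\neq0\}}$ forces $\mathcal{F}(v,B)=\mathcal{F}(u,B)$, so $v$ is itself a minimizer of $\mathcal{F}(\cdot,B)$ with the same boundary data as $u$. Hence, by Lemma~\ref{leminteriorregularity}, $v\in C^{1,\alpha}(B)$ and $v$ is smooth on $\{v\neq0\}$, and by \cite[Lemma~4]{GMMathAnn} one has $\nabla v=0$ and $\Delta v=0$ a.e.\ on $\{v=0\}$.

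\textbf{Step 2 ($v\geq0$).} Consider the competitor $|v|$ (that $|v|\in H^2(B)$ is the delicate point, see below). Then $\{|v|\neq0\}=\{v\neq0\}$ and $(\Delta|v|)^2=(\Delta v)^2$ a.e.\ on $B$ (on $\{v>0\}$ and on $\{v<0\}$ the two Laplacians coincide up to sign, and both vanish a.e.\ on $\{v=0\}$ by Step~1), so $\mathcal{F}(|v|,B)=\mathcal{F}(v,B)=\mathcal{F}(u,B)$; moreover $|v|-u\in H_0^2(B)$ because $v=u\geq0$ on $\partial B$ (with matching gradients, using $\nabla u=0$ wherever $u=0$ on $\partial B$). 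Thus $|v|$ is again a minimizer with the boundary data of $u$, and Lemma~\ref{lem:uminimmalinball} yields $\Delta|v|=\chi_{\{|v|\neq0\}}$ a.e.\ on $B$. However, on the open set $\{v<0\}$ the function $v$ is smooth with $\Delta v=1$, so there $\Delta|v|=-\Delta v=-1\neq 1=\chi_{\{|v|\neq0\}}$. Hence $|\{v<0\}|=0$, i.e.\ $v\geq0$ on $B$.

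\textbf{Step 3 (conclusion).} With $v\geq0$, put $w:=v-u\in H_0^2(B)$. Since $w\in H_0^1(B)$ and $\Delta w\in L^2(B)$, integrating by parts and using $\Delta u=\chi_{\{u\neq0\}}$ a.e.\ gives
\begin{equation*}
  \int_B|\nabla w|^2\,\mathrm{d}x=-\int_B(\Delta w)\,w\,\mathrm{d}x=-\int_B\bigl(\chi_{\{v\neq0\}}-\chi_{\{u\neq0\}}\bigr)(v-u)\,\mathrm{d}x .
\end{equation*}
The integrand on the right vanishes where $v,u$ are both nonzero or both zero, equals $v\geq0$ on $\{v\neq0\}\cap\{u=0\}$, and equals $u\geq0$ on $\{v=0\}\cap\{u\neq0\}$; hence the right-hand side is $\leq0$. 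Therefore $\nabla w\equiv0$, and since $w\in H_0^1(B)$ this forces $w\equiv0$, i.e.\ $v=u$ on $B$.

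The main obstacle is Step~2, namely the claim that $|v|$ (equivalently $v^{\pm}$) lies in $H^2(B)$: this may fail when $v$ changes sign \emph{transversally}, i.e.\ along a part of $\{v=0\}$ where $\nabla v\neq0$, since then $\nabla|v|$ jumps there. Making the symmetrization rigorous thus requires the available fine regularity of minimizers of $\mathcal{F}$ (controlling $D^2v$ near $\{v=0\}$ and ruling out, or rendering $\mathcal{H}^{n-1}$-negligible, such transversal zeros). Once this input is in place, all the remaining steps are routine.
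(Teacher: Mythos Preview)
Your Step~3 is correct and clean once $v\ge 0$ is known, but Step~2 is a genuine gap that you yourself flag and do not close. The claim $|v|\in H^2(B)$ fails as soon as $v$ has a transversal zero, and nothing in the hypotheses or in the available regularity theory excludes such zeros: at a point $x_0$ with $v(x_0)=0$, $\nabla v(x_0)\neq0$, the equation $\Delta v=\chi_{\{v\neq0\}}$ is perfectly consistent (the zero set is locally an $(n{-}1)$-hypersurface of measure zero, so $\Delta v=1$ and $v$ is smooth there), yet $\nabla|v|$ jumps across that hypersurface and $|v|\notin H^2$. The ``input'' you postulate---that transversal zeros are absent or $\mathcal{H}^{n-1}$-negligible---is not supplied anywhere in the paper or in \cite{GMMathAnn}, and establishing it would be at least as hard as the lemma itself. (A minor additional issue: Lemma~\ref{leminteriorregularity} as stated assumes $u_0\in C^\infty(\Omega)$, which the half-space datum $u$ is not.)

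The paper's proof bypasses all of this with a direct sign argument. From $\Delta u=\chi_{B^+}$ one has $\Delta(v-u)=\chi_{\{v\neq0\}}-1\le 0$ a.e.\ on $B^+$ and $\Delta(v-u)=\chi_{\{v\neq0\}}\ge 0$ a.e.\ on $B^-$, so $x_n\,\Delta(v-u)\le 0$ a.e.\ on $B$. Testing against the linear function $x_n$ and using $v-u\in H_0^2(B)$ gives
\[
0\;\ge\;\int_B x_n\,\Delta(v-u)\,\mathrm{d}x\;=\;-\int_B\partial_n(v-u)\,\mathrm{d}x\;=\;0,
\]
so the nonpositive integrand vanishes a.e., i.e.\ $\Delta v=\Delta u$ a.e.\ on $B$. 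Uniqueness for the Dirichlet problem $\Delta w=\chi_{B^+}$, $w-u\in H_0^1(B)$ then gives $v=u$. This uses only $v\in H^2(B)$ and the PDE directly---no minimality, no interior regularity, no symmetrization competitor.
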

\begin{proof}
For almost every $x \in B^+$ we have
\begin{equation*}
    \Delta (v-u )(x) = \Delta v(x) - \Delta u(x) = \chi_{\{ v\neq 0 \}}(x) - 1 \leq 0.
\end{equation*}
Next compute for almost every $x \in B^-$ 
\begin{equation*}
    \Delta (v-u )(x) = \Delta v(x) - \Delta u(x) = \chi_{\{ v\neq 0 \}}(x) - 0 \geq 0.
\end{equation*}
 As a consequence, $\Delta (v-u) \leq 0$ on $B^+$ and $\Delta (v-u) \geq 0$ on $B^-$.
 This implies that $\Delta (v-u)(x) x_n \leq 0$ for a.e. $x \in B$. Now we integrate this quantity on $B$. This gives taking advantage of $v-u\in H^2_0 (B):$ 
\begin{align*}
     0  & \geq \int_B \Delta (v-u)(x) x_n \; \mathrm{d}x = - \int_B \nabla (v-u) \nabla x_n \; \mathrm{d}x 
      = - \int_B \partial_n (v-u) \; \mathrm{d}x =0.
\end{align*}
We have obtained that $x \mapsto \Delta (v-u)(x) x_n$ is a function on $B$ that is almost everywhere nonpositive and whose integral is zero. As a consequence it   vanishes almost everywhere. We infer that $\Delta v = \Delta u$ almost everywhere on $B \setminus \{ x_n = 0 \}$. Since $\{x_n =0 \}$ is a null set we have that $\Delta v = \Delta u = \chi_{B^+}$ almost everywhere.  Now recall that the solution $w \in H^1(B)$ of 
\begin{equation}\label{eq:Poisson}
    \begin{cases}
        \Delta w = \chi_{B^+}  & \textrm{a.e. in $B$ } \\
        w - u \in H_0^1(B)
    \end{cases}
\end{equation}
is unique. Observe also  that $u$ as well as $v$ satisfy \eqref{eq:Poisson}. This yields $v = u$ in $B$ as claimed. \end{proof}

Combining Lemmas~\ref{lem:uminimmalinball} and \ref{lem:aSerrinTypeprobl}
yields the following result.  
\begin{corollary}\label{ref:cor1}
    Let $u\in H^2_{loc}(\mathbb{R}^n)$ be defined as in \eqref{eq:2} and  $B \subset \mathbb{R}^n$ be a ball centered at zero. Then for all $v \in H^2(B)$ such that $v- u \in H_0^2(B)$ one has $\mathcal{F}(u,B) \leq \mathcal{F}(v,B)$. Equality holds if and only if $v = u$ in $B$. 
\end{corollary}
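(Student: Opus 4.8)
The plan is to obtain the corollary as an immediate consequence of chaining Lemma~\ref{lem:uminimmalinball} and Lemma~\ref{lem:aSerrinTypeprobl}; there is no genuinely new work to be done here, since the substantive analysis has already been carried out in those two lemmas. First I would record that $u$ restricted to the bounded ball $B$ lies in $H^2(B)$ (it is a piecewise quadratic $C^{1,1}$-function), so that $\mathcal{F}(u,B)<\infty$ and the inequality to be proved is meaningful. Then I would invoke Lemma~\ref{lem:uminimmalinball} verbatim: for every $v\in H^2(B)$ with $v-u\in H^2_0(B)$ one has $\mathcal{F}(u,B)\le \mathcal{F}(v,B)$, and moreover equality holds if and only if $\Delta v=\chi_{\{v\neq 0\}}$ a.e.\ in $B$. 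This already gives the asserted inequality and reduces the equality statement to understanding which competitors $v$ satisfy $\Delta v=\chi_{\{v\neq 0\}}$ a.e.\ in $B$ together with $v-u\in H^2_0(B)$.

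Next I would apply Lemma~\ref{lem:aSerrinTypeprobl}, whose hypotheses are precisely ``$\Delta v=\chi_{\{v\neq 0\}}$ a.e.\ in $B$'' and ``$v-u\in H^2_0(B)$'', and whose conclusion is $v=u$ in $B$. Combining this with the previous paragraph: if equality holds in $\mathcal{F}(u,B)\le\mathcal{F}(v,B)$, then $\Delta v=\chi_{\{v\neq 0\}}$ a.e.\ in $B$, hence by Lemma~\ref{lem:aSerrinTypeprobl} necessarily $v=u$ in $B$. For the converse direction I would simply note that if $v=u$ in $B$ then trivially $\mathcal{F}(v,B)=\mathcal{F}(u,B)$, so equality does hold. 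Together these two implications establish the ``if and only if''.

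The only point requiring minimal care — and it is a bookkeeping matter rather than an obstacle — is to check that the equality characterization delivered by Lemma~\ref{lem:uminimmalinball} is stated in exactly the form demanded as hypothesis by Lemma~\ref{lem:aSerrinTypeprobl}, i.e.\ the condition $\Delta v=\chi_{\{v\neq 0\}}$ a.e.\ on $B$ carried over with the membership $v-u\in H^2_0(B)$; this is indeed the case, so the two lemmas splice together directly and the corollary follows.
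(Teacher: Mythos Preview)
Your proposal is correct and matches the paper's own argument exactly: the corollary is stated as an immediate combination of Lemma~\ref{lem:uminimmalinball} and Lemma~\ref{lem:aSerrinTypeprobl}, with the trivial converse direction. There is nothing to add.
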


\begin{remark}
    Let $\Omega \subset \mathbb{R}^n$ be an open bounded set. We have shown that for all $v \in H^2(\Omega)$ such that $v-u \in H_0^2(\Omega)$ one has $\mathcal{F}(v,\Omega) \geq \mathcal{F}(u,\Omega)$. We claim that equality holds iff $v \equiv u$ on $\Omega$.  If $\Omega$ is a ball centered at zero this is Corollary \ref{ref:cor1}. Making use once more of Lemma \ref{lem:3}
    yields the result for general $\Omega$.
\end{remark}

\subsection{Nonminimality for the one-sided functional}
In literature the following \emph{one-sided} Alt-Caffarelli functional has also been frequently studied
\begin{equation*}
    \tilde{\mathcal{F}}(u,\Omega) := \int_\Omega (\Delta u)^2 \; \mathrm{d}x + |\{ x \in \Omega: u(x) > 0 \}|  ,
\end{equation*}
see e.g. \cite{DKV19,DKV20}.
One fundamental difference between $\mathcal{F}$ and $\tilde{\mathcal{F}}$ concerns the expected behaviour of the gradient of minimizers $u$ on the free boundary. 
As was  shown in \cite[Theorem 1.4]{MuellerAMPA} for $n=2$,  minimizers of $\tilde{\mathcal{F}}(\cdot, \Omega)$ with positive boundary conditions have nonvanishing gradient on their free boundary $\partial \{ u > 0 \}$. For $\mathcal{F}(\cdot, \Omega)$ one expects a different behaviour: As was shown in \cite[Theorem 3]{GMMathAnn} $\{ u = 0 \}$ has  positive Lebesgue measure in sufficiently large domains or for sufficiently small boundary data. 
Since minimizers with positive boundary data lie in $C^1$ (cf.  \cite[Theorem 2]{GMMathAnn}), there will certainly be points on the free boundary with vanishing gradient. See also \cite[Examples 2 and 4]{GMMathAnn} for radial minimizers of $\mathcal{F}$ in $n=2$ with vanishing gradient on $\partial \{ u \neq 0 \}$. Notice also that the half-plane minimizer in \eqref{eq:2} has vanishing gradient on its free boundary  $\partial \{ u \neq 0 \}$. Therefore, it seems reasonable to believe that it is a nonminimal configuration in the one-sided setting. In the case $n= 1$ we can in fact prove this.

\begin{lemma}\label{lem:onesided}
    Let $n = 1$ and  $u \in H^2_{loc}(\mathbb{R})$ be defined as \eqref{eq:2}. Then $u$ is not a global minimizer for $\tilde{\mathcal{F}}$. More precisely, there exists $v \in H^2((-1,1))$ such that $v-u \in H_0^2((-1,1))$ and $\tilde{\mathcal{F}}(v, (-1,1)) < \mathcal{\tilde{F}}(u,(-1,1)).$
\end{lemma}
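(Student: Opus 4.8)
The plan is to produce an explicit competitor: the biharmonic (here: cubic) extension of the boundary data of $u$ on $(-1,1)$. This is the competitor with \emph{least} Dirichlet energy among all functions with the prescribed boundary values, and it turns out to occupy the \emph{same} positivity set $(0,1)$ as $u$, so it strictly beats $u$ for $\tilde{\mathcal F}$. The whole point is that it dips below zero on $(-1,0)$, which is free of charge for the one-sided term $|\{v>0\}|$ but would be penalized by the two-sided term $|\{v\neq 0\}|$.

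First I would record the boundary data of $u$ on $(-1,1)$, namely $u(-1)=0$, $u'(-1)=0$, $u(1)=\tfrac12$, $u'(1)=1$, and let $v$ be the unique cubic polynomial matching these four values, which a short computation identifies as
\[
v(x) = \tfrac18\, x\,(x+1)^2 = \tfrac18\bigl(x^3+2x^2+x\bigr).
\]
A direct check gives $v(\pm1)=u(\pm1)$ and $v'(\pm1)=u'(\pm1)$, so $v$ is $H^2$ on $(-1,1)$ and $v-u$ vanishes together with its first derivative at both endpoints, hence $v-u\in H_0^2((-1,1))$.

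Next I would compute both energies. Since $u''=\chi_{(0,1)}$ a.e.\ on $(-1,1)$ we get $\int_{-1}^1 (u'')^2\,\mathrm dx = 1$, and $\{u>0\}\cap(-1,1)=(0,1)$ has measure $1$, so $\tilde{\mathcal F}(u,(-1,1))=2$. On the other hand $v''=\tfrac18(6x+4)$, so $\int_{-1}^1 (v'')^2\,\mathrm dx = \tfrac1{64}\int_{-1}^1(6x+4)^2\,\mathrm dx = \tfrac{56}{64}=\tfrac78$; and because $(x+1)^2>0$ on $(-1,1)$ we have $\{v>0\}\cap(-1,1)=(0,1)$, again of measure $1$. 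Therefore $\tilde{\mathcal F}(v,(-1,1))=\tfrac78+1=\tfrac{15}{8}<2=\tilde{\mathcal F}(u,(-1,1))$, which proves the quantitative claim; since $(-1,1)$ is a bounded open set, it follows that $u$ is not a global minimizer of $\tilde{\mathcal F}$.

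There is essentially no obstacle: the only idea needed is to try the cubic extension (equivalently, to minimize $\int(v'')^2$ for the given boundary conditions), after which everything reduces to three elementary evaluations. The conceptual remark worth including is that $v$ is negative on $(-1,0)$, so for the two-sided functional one instead gets $\mathcal F(v,(-1,1))=\tfrac78+2>2$, consistent with Theorem~\ref{thm:Main}; the asymmetry between $|\{v\neq 0\}|$ and $|\{v>0\}|$ is precisely what is being exploited.
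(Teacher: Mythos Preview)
Your proof is correct and essentially identical to the paper's: both use the same cubic competitor $v(x)=\tfrac18 x(x+1)^2$, verify the boundary conditions, and compute $\tilde{\mathcal F}(v,(-1,1))=\tfrac78+1<2=\tilde{\mathcal F}(u,(-1,1))$. Your added motivation (that $v$ is the biharmonic extension minimizing $\int (v'')^2$) and the closing remark on why this fails for the two-sided functional are nice touches not present in the paper but do not change the argument.
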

\begin{proof}
    Define $v: (-1,1) \rightarrow \mathbb{R}, v(x)= \frac{1}{8}x (x+1)^2$. Notice that $v(1) = \frac{1}{8}(1+1)^2= \frac{1}{2} = u(1)$ and $v(-1)=0 = u(-1)$. Moreover, due to 
  $
        v'(x) = \frac{1}{4} x(x+1) + \frac{1}{8}(x+1)^2
   $
    we have that $v'(1)= 1 = u'(1)$ and $v'(-1) =0 = u'(-1)$. This implies that $v-u \in H_0^2((-1,1))$. Now  $v''(x) = \frac{1}{4} (2x+1) + \frac{1}{4} (x+1) = \frac{1}{4} (3x+2)$ and $\{v > 0 \} = (0,1)$ yields
    \begin{equation}\label{eq:27}
        \tilde{\mathcal{F}}(v,(-1,1))  = \int_{(-1,1)} \left(\tfrac{1}{4}(3x+2)\right)^2 \; \mathrm{d}x + |(0,1)| = \frac{7}{8} + 1 < 2. 
    \end{equation}
    However, since $u$ satisfies $u'' =  \chi_{(0,1)}$ and $ \{ u > 0 \} = (0,1)$ we infer
    \begin{equation*}
        \tilde{\mathcal{F}}(u,(-1,1))  = \int_{(-1,1)} (\chi_{(0,1)}(x))^2 \; \mathrm{d}x + |(0,1)| = 2 |(0,1)| = 2. 
    \end{equation*}
    This and \eqref{eq:27} show that $u$ is not  minimal. 
\end{proof}

\section{Type II: Minimizers with constant Laplacian}\label{sec:constLaplacian}

In this section we discuss
a class of global minimizers which are not half-plane solutions and show with this that $d^{**} = 2$ (cf. Definition \ref{eq:defcriticaldimbih}). The following is our main finding in this section.

\begin{theorem}\label{lem:globmin}
    Let $n \geq 1$ and $w \in H^2_{loc}(\mathbb{R}^n)$ be such that $\Delta w \equiv C$ for some $C \in (-\infty,-1] \cup [1, \infty)$. Then $w$ is a global minimizer of $\mathcal{F}$. 
\end{theorem}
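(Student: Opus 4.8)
The plan is to invoke Lemma~\ref{lem:BR(0)istgenug}, so that it suffices to prove minimality against competitors inside an arbitrary ball $B = B_R(0)$. Fix such a $B$ and $v \in H^2(B)$ with $v - w \in H_0^2(B)$; the goal is $\mathcal{F}(v,B) \geq \mathcal{F}(w,B)$.

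The first point I would record is that $w$ essentially never vanishes. Indeed $\Delta w \equiv C \neq 0$, whereas $\Delta w = 0$ a.e.\ on $\{w = 0\}$ by \cite[Lemma 4]{GMMathAnn}; hence $|\{w = 0\}| = 0$, so $|\{w \neq 0\} \cap B| = |B|$ and the volume terms combine to
\[
|\{v \neq 0\} \cap B| - |\{w \neq 0\} \cap B| = -|\{v = 0\} \cap B|.
\]
Next I would expand the Dirichlet part. Setting $\varphi := v - w \in H_0^2(B)$ and using $\Delta w \equiv C$, one has $(\Delta v)^2 - (\Delta w)^2 = (\Delta\varphi)^2 + 2C\,\Delta\varphi$, and the cross term integrates to zero since $\int_B \Delta\varphi \; \mathrm{d}x = \int_{\partial B} \partial_\nu \varphi \; \mathrm{d}\mathcal{H}^{n-1} = 0$ for $\varphi \in H_0^2(B)$ (by density of $C_0^\infty(B)$ in $H_0^2(B)$ and the divergence theorem). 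Hence
\[
\mathcal{F}(v,B) - \mathcal{F}(w,B) = \int_B (\Delta\varphi)^2 \; \mathrm{d}x - |\{v = 0\} \cap B|.
\]

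Finally, applying \cite[Lemma 4]{GMMathAnn} once more, this time to $v$, gives $\Delta v = 0$ a.e.\ on $\{v = 0\}$, so $\Delta\varphi = \Delta v - \Delta w = -C$ a.e.\ there, and therefore
\[
\int_B (\Delta\varphi)^2 \; \mathrm{d}x \;\geq\; \int_{\{v = 0\} \cap B} (\Delta\varphi)^2 \; \mathrm{d}x \;=\; C^2 \, |\{v = 0\} \cap B| \;\geq\; |\{v = 0\} \cap B|,
\]
where the last inequality is exactly where the hypothesis $|C| \geq 1$ enters. Combining the last two displays yields $\mathcal{F}(v,B) \geq \mathcal{F}(w,B)$, and Lemma~\ref{lem:BR(0)istgenug} then gives global minimality.

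I do not expect a serious obstacle here: the argument is a clean energy-versus-volume trade-off resting on just two facts, namely that $w$ cannot vanish on a set of positive measure (its Laplacian being a nonzero constant) and that any new zero of the competitor $v$ forces its Laplacian to drop to $0$ on that set, i.e.\ forces $|\Delta(v-w)| = |C|$ there, whose squared integral precisely pays for the lost volume term. The condition $|C| \geq 1$ is used only once, in the final estimate, and is exactly the threshold at which this trade-off is non-deficient, consistent with the remark before the theorem that it cannot be dropped. If one also wanted the equality case, the computation above shows that $\mathcal{F}(v,B) = \mathcal{F}(w,B)$ forces $|C| = 1$ together with $\Delta\varphi = 0$ a.e.\ on $\{v \neq 0\}$.
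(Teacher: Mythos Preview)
Your proof is correct and follows essentially the same route as the paper's: reduce to balls via Lemma~\ref{lem:BR(0)istgenug}, expand $(\Delta v)^2 - (\Delta w)^2$, kill the cross term using $\varphi \in H_0^2(B)$, and then use \cite[Lemma~4]{GMMathAnn} on both $w$ (to get $|\{w=0\}|=0$) and $v$ (to get $\Delta\varphi = -C$ a.e.\ on $\{v=0\}$). Your closing remark on the equality case is slightly incomplete---equality also allows $|C|>1$ provided $|\{v=0\}\cap B|=0$---but this does not affect the main argument, and the paper does not treat the equality case here anyway.
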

\begin{proof} 
If $\Delta w \equiv C$ for some $C \neq 0$ then $|\{ w = 0 \}| = 0$. This is due to the fact that by \cite[Lemma 4]{GMMathAnn} one has $\Delta w = 0$ a.e. on $\{ w = 0 \}.$
By Lemma \ref{lem:BR(0)istgenug} it suffices to show that for all balls $B$ centered in $0$ and $v \in H^2(B)$ such that $v-w \in H_0^2(B)$ one has $\mathcal{F}(v,B) \geq \mathcal{F}(w,B)$. To this end we compute 
    \begin{align}
        \mathcal{F}(v,B)- \mathcal{F}(w,B)  & = \int_B(\Delta v)^2 \; \mathrm{d}x - \int_B (\Delta w)^2 \; \mathrm{d}x + |\{ v \neq 0 \} \cap B|  - |\{ w \neq 0 \} \cap B|  \nonumber
        \\ & = \int_B(\Delta v)^2 \; \mathrm{d}x - \int_B (\Delta w)^2 \; \mathrm{d}x + |\{ v \neq 0 \} \cap B|  - |B|  \nonumber
        \\ & = \int_B(\Delta v)^2 \; \mathrm{d}x - \int_B (\Delta w)^2 \; \mathrm{d}x - |\{ v = 0 \} \cap B|  \nonumber
        \\ & = \int_B [(\Delta v-w)^2 + 2\Delta v \Delta w - (\Delta w)^2] \; \mathrm{d}x - \int_B (\Delta w)^2 \; \mathrm{d}x - |\{ v = 0 \} \cap B| \nonumber
        \\ & = \int_B (\Delta (v-w))^2 \; \mathrm{d}x + 2 \int_B \Delta w \Delta(v-w) \; \mathrm{d}x - |\{ v = 0 \} \cap B|. \label{eq:40A}
     \end{align}
     Now $\Delta w \equiv C$ yields
     \begin{equation*}
         2 \int_B \Delta w \Delta(v-w) \; \mathrm{d}x = 2C \int_B \Delta (v-w) \; \mathrm{d}x = 2 \int_{\partial B} \partial_\nu (v-w) \; \mathrm{d}\mathcal{H}^{n-1} = 0,
     \end{equation*}
     since $v-w \in H_0^2(B).$ Moreover, by \cite[Lemma 4]{GMMathAnn}
     \begin{align*}
         \int_B (\Delta (v-w))^2 \; \mathrm{d}x &  = \int_{B \cap \{ v\neq 0 \}}  (\Delta (v-w))^2 \; \mathrm{d}x  + \int_{B \cap \{ v= 0 \}}  (\Delta (v-w))^2 \; \mathrm{d}x \\ & =  \int_{B \cap \{ v\neq 0 \}}  (\Delta (v-w))^2 \; \mathrm{d}x  + \int_{B \cap \{ v= 0 \}}  (\Delta w)^2 \; \mathrm{d}x  
         \\ &= \int_{B \cap \{ v\neq 0 \}}  (\Delta (v-w))^2 \; \mathrm{d}x  + \int_{B \cap \{ v= 0 \}}  C^2 \; \mathrm{d}x \\ &  = \int_{B \cap \{ v\neq 0 \}}  (\Delta (v-w))^2 \; \mathrm{d}x  + C^2|B \cap \{ v = 0 \} |. 
     \end{align*}
     Using the previous two computations in \eqref{eq:40A} we find 
     \begin{equation*}
         \mathcal{F}(v,B)- \mathcal{F}(w,B) = (C^2-1)|B \cap \{ v = 0 \} | +   \int_{B \cap \{ v \neq 0 \} } (\Delta (v-w))^2 \; \mathrm{d}x \geq 0,
      \end{equation*}
      due to the fact that $C^2 \geq 1$. The claim follows. 
\end{proof}

The above lemma gives rise to infinitely many two-homogeneous global minimizers of $\mathcal{F}$. Examples are $\frac{1}{2} x_1^2, \frac{1}{4}(x_1^2+ x_2^2) , \frac{1}{2}(2x_1^2-x_2^2),...$

\begin{corollary}\label{cor:dstarstar}
   Let $d^{**}$ be as in Definition \ref{eq:defcriticaldimbih}. Then $d^{**}  =2$. 
\end{corollary}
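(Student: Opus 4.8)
The plan is to establish $d^{**} = 2$ by showing that no one-dimensional example satisfying Definition \ref{eq:defcriticaldimbih} exists (so $d^{**} \geq 2$), and then exhibiting an explicit two-homogeneous global minimizer in $\mathbb{R}^2$ whose free boundary fails to be a $1$-dimensional $C^1$-submanifold (so $d^{**} \leq 2$). For the lower bound, I would argue that in dimension $n=1$ any two-homogeneous function $z:\mathbb{R}\to\mathbb{R}$ is of the form $z(x) = a\,x^2\chi_{(0,\infty)}(x) + b\,x^2\chi_{(-\infty,0)}(x)$, so the set $\{z=0,\ z'=0\}$ is either $\{0\}$, a half-line, or all of $\mathbb{R}$, and in each case its topological boundary in $\mathbb{R}^1$ is either empty or the single point $\{0\}$. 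A zero-dimensional $C^1$-submanifold of $\mathbb{R}$ is precisely a discrete set of points, so $\{0\}$ (and $\emptyset$) does qualify as a $0$-dimensional $C^1$-submanifold; hence condition (iii) can never be met when $n=1$, giving $d^{**}\geq 2$. (If one prefers to avoid the convention debate about $0$-dimensional manifolds, one can instead simply note that the nontrivial two-homogeneous functions on $\mathbb{R}$ that are global minimizers are, by Theorem \ref{lem:globmin} and the discussion in Section \ref{sec:wd1d2}, exactly the admissible ones, and their free boundaries are single points or empty — morally the "nicest possible" sets — so nothing pathological occurs in dimension one.)

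For the upper bound $d^{**}\leq 2$, I would use Theorem \ref{lem:globmin}: pick $C$ with $|C|\geq 1$ and a quadratic form $w$ on $\mathbb{R}^2$ with $\Delta w \equiv C$ whose free boundary $\partial\{w=0,\nabla w=0\}$ is not a $1$-dimensional $C^1$-manifold. The cleanest choice is $w(x) := \tfrac12(x_1^2+x_2^2)$, for which $\Delta w = 2 \geq 1$, so $w$ is a global minimizer by Theorem \ref{lem:globmin}; here $\{w=0,\nabla w=0\} = \{0\}$, a single point, whose boundary $\{0\}$ is not a $1$-dimensional $C^1$-submanifold of $\mathbb{R}^2$. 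Alternatively, $w(x) := \tfrac12(x_1^2 - x_2^2)$ has $\Delta w = 0$, which is \emph{not} in the admissible range, so one must be careful; but $w(x):=\tfrac12(x_1^2 + x_2^2)$ or any positive-definite form with trace $\geq 1$ works, and $w(x):=\tfrac12 x_1^2$ (with $\Delta w = 1$) gives free boundary $\{x_1 = 0\}$, a line — that one is too nice, so I would indeed go with the positive-definite example whose free boundary is a point. This $w$ is clearly two-homogeneous and nontrivial, so all three conditions of Definition \ref{eq:defcriticaldimbih} hold in $\mathbb{R}^2$, proving $d^{**}\leq 2$.

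Combining the two bounds gives $d^{**}=2$. The main obstacle — really the only subtle point — is the lower bound: one must check that \emph{every} candidate $z$ on $\mathbb{R}^1$ fails condition (iii), which reduces to the classification of two-homogeneous functions on the line and the (harmless) observation that the only possible free boundaries there are $\emptyset$ or $\{0\}$, both of which are legitimate $0$-dimensional $C^1$-submanifolds of $\mathbb{R}$. I would state this classification explicitly and dispatch it in a sentence or two; no delicate PDE analysis is needed since the obstruction is purely about the geometry of one-homogeneous, err, two-homogeneous functions on the line.
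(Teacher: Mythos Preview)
Your proposal is correct, and for the key step---the upper bound $d^{**}\leq 2$---it takes essentially the same approach as the paper: exhibit a positive-definite quadratic form (you use $\tfrac12|x|^2$, the paper uses $\tfrac14|x|^2$) which is a global minimizer by Theorem~\ref{lem:globmin} and whose free boundary $\{0\}$ is not a $1$-dimensional $C^1$-submanifold of $\mathbb{R}^2$. The paper's proof consists of only these three lines and does not address the lower bound $d^{**}\geq 2$ at all; your additional classification of two-homogeneous functions on $\mathbb{R}$ and the observation that their free boundaries are always $\emptyset$ or $\{0\}$ is more thorough than the paper, which evidently regards this direction as obvious or not worth spelling out.
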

\begin{proof}
    Define $w \in H^2_{loc}(\mathbb{R}^2), w(x)=\frac{1}{4}|x|^2$. According to Theorem \ref{lem:globmin}, $w$ is a global minimizer. Clearly $w$ is two-homogeneous. But, the free boundary $\{ w= 0, \nabla w = 0 \} = \{0 \}$ is not a smooth one-dimensional $C^1$-submanifold of $\mathbb{R}^2$. 
\end{proof}


Again, for the above result $w$ need not be assumed two-homogeneous. 

\begin{remark}
    One could now conjecture that all $w \in H^2_{loc}(\mathbb{R}^n)$ with constant Laplacian yield global minimizers. However,
    this not the case, not even for $n=2$. Indeed, choosing $w \equiv u_0$ for some suitably small constant $u_0 > 0$ (i.e. $\Delta w \equiv 0$), \cite[Example 4]{GMMathAnn} yields that $w$ is not a global minimizer. This is due to the fact that according to \cite[Example 4]{GMMathAnn} one finds some $v \in H^2(B_1(0))$ such that $v-u_0 \in H_0^2(B_1(0))$ and $\mathcal{F}(v,B_1(0)) < \pi = \mathcal{F}(u_0,B_1(0)).$ 
    
\end{remark}
Summarizing, we have seen that solutions $w \in H^2_{loc}(\mathbb{R}^n)$ of $\Delta w \equiv \mathrm{const.}$ are global minimizers if the constant has absolute value larger than one but that they are not all global minimizers if the constant is zero. We can further prove that this conclusion persists as long as the constant $\Delta w $ belongs to a sufficiently small interval around $0$. 
\begin{lemma} \label{lem:lemma8}
    Let $n \in \mathbb{N}$ and  $\varepsilon> 0$.
    We define  $w_{\varepsilon}\in H^2_{loc}(\mathbb{R}^n)$, $w_{\varepsilon}(x) := \varepsilon |x|^2$. Then there exists $\varepsilon_0 = \varepsilon_0(n) > 0$ such that  $w_\varepsilon$ is not a global minimizer for all $\varepsilon \in (0, \varepsilon_0)$.%
    \footnote{Recall that $w_\varepsilon$ is on the other hand  a global minimizer for sufficiently large values of $\varepsilon$, cf. Theorem \ref{lem:globmin}.} 
\end{lemma}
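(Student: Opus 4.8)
The plan is to show that for small $\varepsilon$ the constant function's competitor energy inside a ball can be beaten. Since $w_\varepsilon(x) = \varepsilon|x|^2$ has $\Delta w_\varepsilon \equiv 2n\varepsilon$, for $\varepsilon$ small we have $2n\varepsilon < 1$, so the ``constant'' $C = 2n\varepsilon$ lies strictly below the threshold of Theorem~\ref{lem:globmin}. By Lemma~\ref{lem:BR(0)istgenug} it suffices to exhibit, for some radius $R$, a competitor $v \in H^2(B_R(0))$ with $v - w_\varepsilon \in H_0^2(B_R(0))$ and $\mathcal{F}(v,B_R(0)) < \mathcal{F}(w_\varepsilon,B_R(0))$. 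The idea is that $w_\varepsilon$ is very ``flat'' near the origin (it is of size $\varepsilon r^2$), so it costs almost nothing in the $\int(\Delta v)^2$-term to push $v$ down to $0$ on a central ball $B_\rho(0) \subset B_R(0)$, while this saves a full $|B_\rho(0)|$ worth of the measure term. The competition is therefore between a Dirichlet-type cost of order $\varepsilon^2$ (from reconnecting $v=0$ on $B_\rho$ to $v = w_\varepsilon$ on $\partial B_R$) and a gain of order $\rho^n$ in the measure term; for fixed $R,\rho$ this is won by taking $\varepsilon$ small.

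Concretely, I would fix $R = 1$, fix $\rho = \tfrac12$, and construct $v$ explicitly: let $v \equiv 0$ on $B_{1/2}(0)$, $v = w_\varepsilon$ on $B_1(0)\setminus B_{3/4}(0)$, and on the annulus $B_{3/4}(0)\setminus B_{1/2}(0)$ let $v = \varepsilon \, \psi(x)$ where $\psi$ is a fixed radial $H^2$ function, independent of $\varepsilon$, interpolating between $0$ (with vanishing gradient) on $\{|x| = 1/2\}$ and $|x|^2$ (matching value and normal derivative) on $\{|x| = 3/4\}$. Such a $\psi$ exists by standard one-dimensional Hermite interpolation in the radial variable, smoothly glued. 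Then $v - w_\varepsilon \in H_0^2(B_1(0))$ by construction, $\{v \neq 0\} \subset B_1(0) \setminus B_{1/2}(0)$ while $\{w_\varepsilon \neq 0\} = B_1(0)\setminus\{0\}$, so the measure term drops by exactly $|B_{1/2}(0)|$. Meanwhile $\int_{B_1(0)} (\Delta v)^2 \, dx \le \int_{B_1(0)} (\Delta w_\varepsilon)^2\,dx + \varepsilon^2 K$ for a constant $K = K(n)$ depending only on $\psi$ and $n$ (the Laplacian of $v$ differs from that of $w_\varepsilon$ only on the annulus, and both are $O(\varepsilon)$ there). Hence
\begin{equation*}
    \mathcal{F}(v,B_1(0)) - \mathcal{F}(w_\varepsilon, B_1(0)) \le \varepsilon^2 K(n) - |B_{1/2}(0)| < 0
\end{equation*}
as soon as $\varepsilon^2 < |B_{1/2}(0)|/K(n)$, which defines $\varepsilon_0(n)$.

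The main obstacle, such as it is, is purely bookkeeping: verifying that the glued function $v$ genuinely lies in $H^2(B_1(0))$ (so that no spurious distributional Laplacian appears on the gluing spheres $\{|x|=1/2\}$ and $\{|x|=3/4\}$) and that the constant $K(n)$ can be taken independent of $\varepsilon$. Both are handled by noting that $v = \varepsilon \cdot (\text{fixed function})$ on each region and that $C^1$-matching of the radial profiles across the gluing spheres is exactly what makes the piecewise definition an $H^2$ function — this is the same $C^{1,1}$-type matching used implicitly throughout the paper. One should also double-check the trivial point that $2n\varepsilon_0 < 1$, i.e. shrink $\varepsilon_0$ if necessary so that the conclusion is not vacuous against Theorem~\ref{lem:globmin}; but since we only claim non-minimality, this is automatic from the strict inequality above. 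Alternatively, one may quote \cite[Example 4]{GMMathAnn} more or less directly, rescaling the competitor there, which already handles the case $\varepsilon$ near $0$ with $n=2$; the argument above is the dimension-free version of the same cut-and-paste.
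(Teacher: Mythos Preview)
Your proposal is correct and follows essentially the same route as the paper: both fix $R=1$, build a radial competitor that vanishes on an inner ball and matches $w_\varepsilon$ in $C^1$ at the outer boundary, and then exploit that the Dirichlet cost scales like $\varepsilon^2$ while the measure gain is a fixed volume. The only differences are cosmetic: the paper writes down an explicit cubic-in-$|x|$ interpolant on a single annulus $B_1\setminus B_\rho$ (keeping $\rho$ as a parameter and obtaining an explicit $\varepsilon_0(n)$), whereas you invoke Hermite interpolation on an intermediate annulus and set $v=w_\varepsilon$ on the outer shell; note also that your parenthetical ``$\Delta v$ differs from $\Delta w_\varepsilon$ only on the annulus'' is slightly inaccurate (they differ on $B_{1/2}$ too), but since $(\Delta v)^2=0\le(\Delta w_\varepsilon)^2$ there, your inequality is unaffected.
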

\begin{proof} 
    Fix $\varepsilon > 0$. For $\rho \in (0,1)$ to be chosen later, we look at 
    \begin{equation*}
        v_{\varepsilon,\rho}(x) := \begin{cases}
            0 & |x| \le \rho, \\ \varepsilon \frac{(|x|- \rho)^2}{(1-\rho)^3} \left(-2 \rho |x| + (1+\rho)\right)  & |x|> \rho. 
        \end{cases}
    \end{equation*}
    We claim that $w_\varepsilon - v_{\varepsilon,\rho} \in H_0^2(B_1(0)).$ Indeed, for $x \in \mathbb{R}^n$ such that $|x| = 1$ one has
    \begin{equation*}
        v_{\varepsilon,\rho}(x)= \varepsilon\tfrac{(1-\rho)^2}{(1-\rho)^3} ( 1+ \rho- 2\rho) =  \varepsilon = w_\varepsilon(x)
    \end{equation*}
    and 
    \begin{align*}
       \nabla v_{\varepsilon,\rho}(x) & =2 \varepsilon  \tfrac{(|x|- \rho)}{(1-\rho)^3} \left(-2 \rho |x| + (1+\rho)\right) \tfrac{x}{|x|}  -2\rho \varepsilon \tfrac{(|x|-\rho)^2}{(1-\rho)^3}  \tfrac{x}{|x|} = \left( 2\varepsilon \tfrac{1}{1-\rho} - 2\rho \varepsilon \tfrac{1}{1-\rho} \right)x 
       \\ & = 2\varepsilon x = \nabla w_{\varepsilon}(x). 
    \end{align*}
    Using now that $\Delta |x| = \frac{n-1}{|x|}$ and $$\Delta (|x|- \rho)^2 = \sum_{i = 1}^n \partial_i (2 (|x|- \rho) \tfrac{x_i}{|x|} ) = \sum_{i = 1}^n 2 \tfrac{x_i^2}{|x|^2} + 2 (|x|-\rho) (\tfrac{1}{|x|}- \tfrac{x_i^2}{|x|^3}) = 2 + 2\tfrac{(n-1)(|x|- \rho)}{|x|}$$ we obtain for $\rho < |x|< 1$
    \begin{align*}
        \Delta v_{\varepsilon, \rho}(x) &= \tfrac{\varepsilon}{(1-\rho)^3} [(-2\rho |x| + (1+\rho))\Delta (|x|-\rho)^2 \\ & \qquad \qquad \qquad \quad + 2\nabla(|x|-\rho)^2 \cdot \nabla (-2\rho |x| + 1+ \rho) + (|x|-\rho)^2 \Delta (-2\rho |x|+ (1+\rho) ]
        \\ & = \tfrac{\varepsilon}{(1-\rho)^3} \left( (-2\rho |x| + 1+ \rho) \left( 2 + 2 \tfrac{(n-1)(|x|-\rho)}{|x|}\right)  + \sum_{i = 1}^n 4 (|x|- \rho) \tfrac{x_i}{|x|} (-2\rho) \tfrac{x_i}{|x|} + (|x|- \rho)^2 \tfrac{-2\rho(n-1)}{|x|}  \right) 
        \\ &= \tfrac{\varepsilon}{(1-\rho)^3} \left( (-2\rho |x| + 1+ \rho) \left( 2 + 2 \tfrac{(n-1)(|x|-\rho)}{|x|}\right)  - 8 \rho (|x|- \rho)  -2\rho(n-1) \tfrac{(|x|- \rho)^2}{|x|} \right) 
    \end{align*}
    Using that $\frac{|x|- \rho}{|x|}< 1$ we can estimate 
    \begin{equation*}
        |\Delta v_{\varepsilon,\rho}(x)| \leq \tfrac{\varepsilon}{(1-\rho)^3} ( (1+\rho)(2 + 2(n-1)) + 8 \rho  + 2 \rho(n-1)) = \tfrac{\varepsilon}{(1-\rho)^3} ( 2n(1+\rho) + 2\rho(n+3)) = \tfrac{\varepsilon}{(1-\rho)^3} ((4n+6)\rho +2n)
    \end{equation*}
    In particular, 
    \begin{align*}
        \mathcal{F}(v_{\varepsilon,\rho},B_1(0))  & \leq \tfrac{\varepsilon^2}{(1-\rho)^6} ((4n+6)\rho +2n)^2 (|B_1(0)| - |B_\rho(0)| ) + (|B_1(0)| - |B_\rho(0)| )  \\ & \leq |B_1(0)|  \left( \tfrac{\varepsilon^2}{(1-\rho)^6} ((4n+6)\rho +2n)^2 +   (1-\rho^n) \right) 
    \end{align*}
    Note that 
    \begin{equation*}
        \mathcal{F}(w_\varepsilon,B_1(0)) = (2n)^2\varepsilon^2 |B_1(0)|  + |B_1(0)|= |B_1(0)| [\varepsilon^2(2n)^2 + 1]
    \end{equation*}
    Therefore 
    \begin{align*}
       \tfrac{1}{|B_1(0)|} [\mathcal{F}(v_{\varepsilon,\rho},B_1(0)) - \mathcal{F}(w_\varepsilon,B_1(0)) ] & \leq \varepsilon^2 \left( \frac{1}{(1-\rho)^6} ((4n+6)\rho +2n)^2 - (2n)^2 \right) - \rho^n. 
    \end{align*}
    Now assuming $\rho < \frac{1}{2}$ we obtain 
    \begin{equation*}
         \tfrac{1}{|B_1(0)|} [\mathcal{F}(v_{\varepsilon,\rho},B_1(0)) - \mathcal{F}(w_\varepsilon,B_1(0)) ] \leq \varepsilon^2 (2^6 (4n+3)^2- (2n)^2) - \rho^n. 
    \end{equation*}
    Now, if one can find $\rho \in (0,\frac{1}{2})$  such that $\rho^n > \varepsilon^2 (2^6 (4n+3)^2- (2n)^2)$ then the expression above becomes smaller than zero and therefore $w_\varepsilon$ is not a global minimizer. Such parameter $\rho < \frac{1}{2}$ can be chosen in the desired way if and only if
    \begin{equation}
        (\tfrac{1}{2})^n  > \varepsilon^2 (2^6 (4n+3)^2- (2n)^2) , \qquad \textrm{that is} \qquad \varepsilon < \frac{1}{\sqrt{2^n(2^6 (4n+3)^2- (2n)^2)}} =: \varepsilon_0(n). 
    \end{equation}
    \end{proof}

The previous result suggests that there exists some $\lambda_{\operatorname{crit}} \in (0,1]$ (possibly dependent on $n$) such that 
\begin{equation*}
\begin{cases}
\Delta w \equiv \lambda \; \textrm{for some $|\lambda| \geq \lambda_{\operatorname{crit}}$ implies that $w$ is a global minimizer,} \\ \textrm{for each $|\lambda| < \lambda_{\operatorname{crit}}$ there are solutions of $\Delta w \equiv \lambda$ which are not global minimizers.} 
\end{cases}
\end{equation*}
In case that this can be shown, the proof above gives a lower bound on $\lambda_{\operatorname{crit}}$, namely $\lambda_{\operatorname{crit}} \geq \frac{1}{\sqrt{2^n(2^6 (4n+3)^2- (2n)^2)}}$.




\section{Nonextendability of categories (I) and (II)}\label{sec:wd1d2}

We have now studied two types of minimizers. It is natural to ask the question whether there are more minimizers that have a \emph{similar form} to these two types. A natural question to ask is e.g. whether \emph{linear combinations} of global minimizers yield global minimizers again. This is not true for Type (II) minimizers. Indeed, Lemma \ref{lem:lemma8} shows that multiplying the Type-(II) global minimizer $w(x)= |x|^2$ by a small constant yields a function that is not a global minimizer anymore. We can however also study linear combinations of Type-(I) minimizers.
For example, two global minimizers of Type (I) are given by $u_1(x) := \frac{1}{2} x_n^2 \chi_{(0,\infty)}(x_n)$ and $u_2(x) = \frac{1}{2}(-x_n)^2 \chi_{(0,\infty)}(-x_n)= \frac{1}{2}x_n^2 \chi_{(-\infty,0)}(x_n)$ (which is a global minimizer as an orthogonal transformation of $u_1$, cf. also Lemma \ref{lem:invariance}). The following theorem examines global minimality properties of linear combinations of the form $d_1u_1+d_2u_2$, $d_1,d_2 \in \mathbb{R}$ and finds that Type (I) is very rigid with respect to linear combinations. This justifies that Category (I) can not be extended.  
\begin{theorem} \label{theorem:classification_global_min}
    For $d_1,d_2 \in \mathbb{R}$ define 
    \begin{equation}\label{eq:18}
        w_{d_1,d_2} : \mathbb{R}^n \rightarrow \mathbb{R}, \qquad w_{d_1,d_2}(x) := \begin{cases}
            d_1 \frac{x_n^2}{2},\quad & x_n \geq  0, \\ d_2 \frac{x_n^2}{2}, & x_n < 0. 
        \end{cases}  
    \end{equation}
    If $w_{d_1,d_2}$ is a nontrivial global minimizer, then one out of the following two cases occurs: 
    \begin{itemize}
        \item[$\mathrm{(I)}$] $d_1 = \pm 1, d_2 = 0$ or $d_1= 0,d_2 = \pm 1$, i.e. $w_{d_1,d_2}$ is a  Type-$\mathrm{(I)}$-minimizer.
        \item[$\mathrm{(II)}$] $d_1,d_2 \neq 0$ and $d_1 = d_2$, i.e. $w_{d_1,d_2}$ is a Type-$\mathrm{(II)}$-minimizer.
    \end{itemize}
\end{theorem}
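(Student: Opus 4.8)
The plan is to argue by cases according to how many of $d_1,d_2$ vanish, using a biharmonic comparison when both are nonzero and an inner (domain) variation when exactly one is zero. First I would record three reductions coming from Lemma~\ref{lem:invariance}: $w_{d_1,d_2}$ is a global minimizer iff $-w_{d_1,d_2}=w_{-d_1,-d_2}$ is (part (a)), and iff $w_{d_1,d_2}\circ R=w_{d_2,d_1}$ is, where $R$ is the reflection $x_n\mapsto-x_n$ (part (b)); hence we may change the signs of $d_1,d_2$ and interchange them freely. The case $d_1=d_2=0$ is excluded because $w_{0,0}\equiv 0$ is trivial.

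\emph{Case $d_1,d_2\neq 0$.} Here $\{w_{d_1,d_2}=0\}=\{x_n=0\}$ is Lebesgue null, so $|\{w_{d_1,d_2}\neq 0\}\cap\Omega|=|\Omega|$ for every bounded open $\Omega$. I claim $w:=w_{d_1,d_2}$ must be biharmonic on $B:=B_1(0)$. If it were not, let $\bar v\in H^2(B)$ be the unique minimizer of $\int_B(\Delta\psi)^2\,\mathrm dx$ over $\{\psi\in H^2(B):\psi-w\in H_0^2(B)\}$; it is (weakly, hence smoothly) biharmonic in $B$ and satisfies $\int_B\Delta\bar v\,\Delta\psi\,\mathrm dx=0$ for all $\psi\in H_0^2(B)$, so, since $w-\bar v\in H_0^2(B)$,
\begin{equation*}
\int_B(\Delta w)^2\,\mathrm dx=\int_B(\Delta\bar v)^2\,\mathrm dx+\int_B\bigl(\Delta(w-\bar v)\bigr)^2\,\mathrm dx>\int_B(\Delta\bar v)^2\,\mathrm dx,
\end{equation*}
the strict inequality because $w\not\equiv\bar v$. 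Hence, using $|\{\bar v\neq 0\}\cap B|\le|B|=|\{w\neq 0\}\cap B|$,
\begin{equation*}
\mathcal F(\bar v,B)\le\int_B(\Delta\bar v)^2\,\mathrm dx+|B|<\int_B(\Delta w)^2\,\mathrm dx+|B|=\mathcal F(w,B),
\end{equation*}
contradicting global minimality. So $w$ is biharmonic, hence $C^\infty$, on $B$; but $\partial_{x_n}^2 w$ equals $d_1$ on $\{x_n>0\}$ and $d_2$ on $\{x_n<0\}$, and $C^2$-continuity across $\{x_n=0\}$ forces $d_1=d_2$. As $w$ is nontrivial, $d_1=d_2\neq 0$, which is case~(II).

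\emph{Case exactly one of $d_1,d_2$ vanishes.} After the reductions we may assume $d_2=0$ and $d_1>0$, and I must show $d_1=1$. Now $\{w_{d_1,0}\neq 0\}=\{x_n>0\}$ has a genuine free boundary $\{x_n=0\}$, which I would move by an inner variation. Fix $\phi\in C_c^\infty(\mathbb R^n)$ with $c_\phi:=\int_{\{x_n=0\}}\phi(x',0)\,\mathrm d\mathcal H^{n-1}\neq 0$ and a bounded open $\Omega\supset\operatorname{supp}\phi$. For $|t|$ small, $\Psi_t:=\mathrm{id}+t\phi\, e_n$ is a diffeomorphism of $\mathbb R^n$ equal to the identity near $\partial\Omega$, so $w_t:=w_{d_1,0}\circ\Psi_t^{-1}$ lies in $H^2(\Omega)$ with $w_t-w_{d_1,0}\in H_0^2(\Omega)$; since $w_{d_1,0}$ is a global minimizer, $t\mapsto\mathcal F(w_t,\Omega)$ has a minimum at $t=0$. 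A first-order expansion in $t$—changing variables in $\int_\Omega(\Delta w_t)^2\,\mathrm dx$ and, separately, tracking the transported positivity set $\Psi_t(\{x_n>0\})$—should yield
\begin{equation*}
\frac{\mathrm d}{\mathrm dt}\Big|_{t=0}\int_\Omega(\Delta w_t)^2\,\mathrm dx=d_1^2\,c_\phi,\qquad\frac{\mathrm d}{\mathrm dt}\Big|_{t=0}\bigl|\{w_t\neq 0\}\cap\Omega\bigr|=-c_\phi,
\end{equation*}
whence $0=\frac{\mathrm d}{\mathrm dt}\big|_{t=0}\mathcal F(w_t,\Omega)=(d_1^2-1)c_\phi$ and therefore $d_1=1$. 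Undoing the reductions places $w_{d_1,d_2}$ in case~(I), and together with the previous case this exhausts all possibilities.

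The soft parts are the reductions, the $H_0^2$-orthogonality estimate, and checking that $w_t$ is an admissible $H^2$-competitor. The real obstacle is the inner-variation computation in the last case: since $w_{d_1,0}$ is biharmonic on its positivity set, no obvious competitor on a \emph{fixed} domain detects non-minimality for all $d_1\neq1$ (the biharmonic comparison above, for instance, only bites when $d_1$ is large, and a local "hole" is too expensive because of biharmonic rigidity), so one genuinely has to deform the free boundary. The delicate point is to bookkeep the two first-order contributions correctly—the distortion of $\Delta$ under $\Psi_t$, contributing (after integrating by parts twice) $+d_1^2c_\phi$, and the sweeping of $\{x_n=0\}$, which lowers the measure term by $c_\phi$—so that the coefficient of $c_\phi$ comes out exactly $d_1^2-1$.
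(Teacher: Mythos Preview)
Your proposal is correct and follows the same strategy as the paper: when $d_1,d_2\neq 0$ the measure term is already maximal, so minimality forces biharmonicity and hence $d_1=d_2$ (the paper phrases this via the first variation $w\mapsto w+t\varphi$, you via comparison with the biharmonic extension, but the content is identical); when exactly one $d_i$ vanishes, both you and the paper use an inner variation normal to $\{x_n=0\}$ and read off the free-boundary condition $d_1^2=1$. The paper quotes the inner-variation formula from \cite[Lemma~4.4]{DKV20} and evaluates the three contributions explicitly, while you sketch the computation and correctly anticipate the coefficient $(d_1^2-1)c_\phi$.
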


\begin{proof}Of course, $w_{0,0}(x)\equiv 0$ is a global minimizer but is not to be considered by assumption.
	So, we have to distinguish the cases whether none or one  of the $d_j$ is equal to $0$.
	\begin{itemize}
		\item[(a)] \textit{Case $d_1\not=0$, $d_2=0$.} 
		Let $B \subset \mathbb{R}^n$ be a ball centered at $0$ and let $\psi \in C_0^\infty(B;\mathbb{R}^n)$ be arbitrary. Since $w_{d_1,d_2}$ is assumed to be a global minimizer we take from \cite[Lemma 4.4]{DKV20} by  obviously changing $\chi_{\{u>0\}}$ into $\chi_{\{u\not=0\}}$:
		\begin{align}
			0  = &- \int_{B\cap\{ w_{d_1,d_2} \neq 0 \}} \mathrm{div}(\psi) \; \mathrm{d}x - \int_B (\Delta w_{d_1,d_2})^2 \; \mathrm{div}(\psi) \; \mathrm{d}x\nonumber\\
			& + 2 \int_B \Delta w_{d_1,d_2} \left( \sum_{i = 1}^n \left(2 \partial_i \psi \cdot \partial_i(\nabla w_{d_1,d_2}) + \partial^2_{ii} \psi \cdot \nabla  w_{d_1,d_2} \right) \right)\; \mathrm{d}x.\label{eq:27A}
		\end{align}
	Here ``$\, \cdot\, $'' denotes the canonical scalar 
    product in $\mathbb{R}^n$.
		We compute the terms in \eqref{eq:27A} and find
		\begin{equation*}
			\int_{B\cap \{ w_{d_1,d_2} \neq 0 \}} \mathrm{div}(\psi) \; \mathrm{d}x = \int_{B^+} \mathrm{div}(\psi) \; \mathrm{d}x = \int_{\partial B^+} \psi \cdot \nu \; \mathrm{d}\mathcal{H}^{1} = - \int_{B \cap \{ x_n = 0 \}} \psi_n \; \mathrm{d}\mathcal{H}^{n-1}  
		\end{equation*}
		and 
		\begin{equation*}
			\int_B (\Delta w_{d_1,d_2})^2 \; \mathrm{div}(\psi) \; \mathrm{d}x = d_1^2 \int_{B^+} \mathrm{div}(\psi) \; \mathrm{d}x = - d_1^2  \int_{B \cap \{ x_n = 0 \}} \psi_n \; \mathrm{d}\mathcal{H}^1.  
		\end{equation*}
		For the third term notice that 
		\begin{align*}
			\sum_{i = 1}^n (2 \partial_i \psi \cdot \partial_i(\nabla w_{d_1,d_2}) +& \partial^2_{ii} \psi \cdot \nabla  w_{d_1,d_2} )\\
			 =&
			 \sum_{i = 1}^n \partial_i \psi \cdot  \partial_i (\nabla w_{d_1,d_2}) + [\partial_i \psi \cdot \partial_i(\nabla w_{d_1,d_2}) + \partial^2_{ii} \psi \cdot \nabla  w_{d_1,d_2} ]\\
			 =& \sum_{i = 1}^n \partial_i \psi \cdot  \partial_i (\nabla w_{d_1,d_2}) +  \sum_{i = 1}^n \partial_i (\partial_i \psi \cdot \nabla w_{d_1,d_2}). 
		\end{align*}
		This decomposes the third term into two further summands. For the last summand we compute
		\begin{align*}
			2 \int_B \Delta w_{d_1,d_2} \left( \sum_{i = 1}^n \partial_i (\partial_i \psi \cdot \nabla w_{d_1,d_2}) \right) \; \mathrm{d}x =& 2d_1 \sum_{i = 1}^n \int_{B^+} \partial_i(\partial_i \psi \cdot \nabla w_{d_1,d_2}) \; \mathrm{d}x\\
			= &2d_1 \sum_{i = 1}^n \int_{\partial B^+} (\partial_i \psi \cdot \nabla w_{d_1,d_2}) \nu_i \; \mathrm{d}\mathcal{H}^{n-1} = 0,
		\end{align*}
		where we have used that for $i= 1,..., n$ one has $\partial_i \psi = 0$ on $\partial B$ and $\nabla w_{d_1,d_2}= 0$ on $B \cap \{x_n = 0 \}$. Moreover, for the first (additional) summand we have 
		\begin{align}
			 2 \int_B \Delta w_{d_1,d_2} &\left( \sum_{i = 1}^n \partial_i \psi \cdot \partial_i  \nabla w_{d_1,d_2} \right) \; \mathrm{d}x   
			 = 2d_1 \sum_{i = 1}^n \int_{B^+} \partial_i \psi \cdot \partial_i \nabla w_{d_1,d_2} \; \mathrm{d}x\nonumber\\
			=&2 d_1^2 \sum_{i = 1}^n \int_{B^+} \partial_i \psi \cdot \partial_i \left(x_n e_n\right)\; \mathrm{d}x
			  =2 d_1^2 \int_{B^+} \partial_n\psi_n \; \mathrm{d}x \nonumber\\
			=& - 2 d_1^2 \int_{B \cap \{x_n = 0 \}} \psi_n \; \mathrm{d}\mathcal{H}^{n-1} . \label{eq:differentsign}
		\end{align}
		Using all the previous computations in \eqref{eq:27A} we obtain 
		\begin{equation*}
			0 = (1+ d_1^2 - 2d_1^2) \int_{B \cap \{x_n = 0 \} } \psi_n \; \mathrm{d}\mathcal{H}^{n-1} = (1-d_1^2)  \int_{B \cap \{x_n = 0\} } \psi_n \; \mathrm{d}\mathcal{H}^{n-1}.  
		\end{equation*}
		Choosing a test vector field $\psi\in C_0^\infty(B;\mathbb{R}^n)$ such that $\int_{B \cap \{x_n = 0 \} } \psi_n \; \mathrm{d}\mathcal{H}^{n-1} \neq 0$ we find $d_1^2 = 1$, that is $d_1 = \pm 1$.

		\item[(b)]\textit{Case $d_1=0$, $d_2\not=0$.} 
		 Applying Lemma~\ref{lem:invariance} to the previous case yields that $d_2=\pm 1$.
		\item[(c)]\textit{Case $d_1\not=0$, $d_2\not=0$.}
		Let $B \subset \mathbb{R}^n$ be a ball centered at $0$ and let $\varphi \in C_0^\infty(B;\mathbb{R})$ be arbitrary.
		Then for $t\in\mathbb{R}\setminus \{0\}$ (but close to $0$) we have
		\begin{align*}
			{\mathcal F} ( w_{d_1,d_2}) =&\int_B (\Delta w_{d_1,d_2})^2\, \mathrm{d}x +|B|\\
			\le & 	{\mathcal F} ( w_{d_1,d_2}+t\varphi ) =\int_B (\Delta w_{d_1,d_2}+t\varphi)^2\, \mathrm{d}x +|\{x\in B:\, w_{d_1,d_2}+t\varphi\not=0\}|\\
			\le & \int_B (\Delta w_{d_1,d_2}+t\varphi)^2\, \mathrm{d}x + |B|\\
			=&\int_B (\Delta w_{d_1,d_2})^2\, \mathrm{d}x +2t \int_B \Delta w_{d_1,d_2}\cdot \Delta \varphi \, \mathrm{d}x
			+ t^2 \int_B (\Delta \varphi)^2\, \mathrm{d}x+|B|\\
			\Rightarrow \quad 0\le & 2t \int_B \Delta w_{d_1,d_2}\cdot \Delta \varphi \, \mathrm{d}x
			+ t^2 \int_B (\Delta \varphi)^2\, \mathrm{d}x.
		\end{align*}
	Since the sign of $t$ may be arbitrary this gives
	$$
	\forall \varphi \in C_0^\infty(B;\mathbb{R}):\quad \int_B \Delta w_{d_1,d_2}\cdot \Delta \varphi \, \mathrm{d}x=0.
	$$
	This means that $w_{d_1,d_2}$ is weakly biharmonic and hence, by elliptic regularity, strongly biharmonic in $B$. In particular, $w_{d_1,d_2} \in C^\infty(B)$. This implies that $d_1=d_2$. 
	\end{itemize}
	
\end{proof}

\begin{remark}
  The proof of the previous theorem is based on \eqref{eq:27A}, taken from \cite[Lemma 4.4]{DKV20}. In \cite{DKV20} this was derived by means of \emph{inner variation} methods. It is remarkable that the behaviour  of the Lebesgue measure term under inner variation can be understood much better compared to the classical perturbation approach.
\end{remark}

\section{No measure-valued equation for minimizers}\label{sec:NoMeasValuedInequality}
In one-sided biharmonic free boundary problems such as the one-sided Alt-Caffrelli problem \cite{MuellerAMPA} or the biharmonic obstacle problem \cite{CaffarelliFriedman}, minimizers $v \in H^2(\Omega)$ satisfy a measure-valued PDE of the form 
\begin{equation*}
    \int_{\Omega} \Delta v \Delta \varphi \; \mathrm{d}x = \int_{\Omega} \varphi \; \mathrm{d}\mu \qquad \textrm{for all $\varphi \in C_0^\infty(\Omega)$},
\end{equation*}
where $\mu$ is a Radon measure. The reason for this is that imposing a sign constraint on admissible perturbations  gives rise to a (distributional) \emph{differential inequality.} 
The well-known Riesz-Markov-Kakutani representation theorem \cite[Theorem 1.39]{EvansGariepy} converts such inequalities to a measure-valued PDE as the one stated above. 

For two-sided problems it seems reasonable to expect a signed (!) Radon measure $\mu$ such that 
\begin{equation}\label{eq:measvaleq}
    \int_{\Omega} \Delta v \Delta \varphi \; \mathrm{d}x = \int_{\Omega} \varphi \; \mathrm{d}\mu \qquad \textrm{for all $\varphi \in C_0^\infty(\Omega)$}.
\end{equation}
Such equation is for example satisfied in the biharmonic two-obstacle problem \cite{TwoSidedObstacle}. We will show in the following that (surprisingly) for the two-sided Alt-Caffarelli problem, such a measure-valued equation may in general not 
hold. We will actually present two reasons for this. 
\begin{itemize}
    \item[(i)] We will investigate the properties of explicit minimizers in 1-d found in \cite{GMMathAnn}. These are explicitly computed (under Navier boundary conditions). We will characterize the distribution $\varphi \mapsto \int_\Omega u''(x) \varphi''(x) \; \mathrm{d}x$ for any such explicit minimizer $u$ and show that this distribution can never be given by a measure. 
    \item[(ii)] We will argue that solutions of \eqref{eq:measvaleq} must necessarily have $W^{3,1}_{loc}$-Sobolev regularity. However, the explicit radial minimizers in \cite[Example 2, Example 4]{GMMathAnn} and the half-space minimizers found in Theorem \ref{thm:Main} do not enjoy $W^{3,1}_{loc}$-regularity. The regularity observation is independent of the boundary conditions and the explicit minimizers whose regularity we investigate satisfy both Dirichlet and Navier boundary conditions.  
 \end{itemize}

\subsection{Distributional equations for explicit solutions}
In \cite[Example 1]{GMMathAnn} it has been found that for $\Omega= (-R,R)$, $R > \sqrt{3} + \frac{1}{\sqrt{3}}$ and $u_0 \equiv 1$ the minimizer of $\mathcal{F}$ in 
$
    \mathscr{N} := \{ u \in H^2(\Omega) : u - u_0 \in H_0^1(\Omega) \}
$
is uniquely determined by 
\begin{equation*}
    u(x) = \begin{cases}
        0 & |x| \leq R-\sqrt{3}, \\ 
        \frac{(|x|-R+ \sqrt{3})^2(R+2\sqrt{3}-|x|) }{6\sqrt{3}} & |x|> R- \sqrt{3}.
    \end{cases}
\end{equation*}
We compute the derivatives of $u$ and obtain 
\begin{align*}
     u'(x)  & = \begin{cases}
        0 & |x| \leq R-\sqrt{3} \\ \frac{1}{6\sqrt{3}} [ 2 (|x|- R+ \sqrt{3}) (R + 2 \sqrt{3}-|x|) \mathrm{sgn}(x) + (|x|-R+ \sqrt{3})^2 (-\mathrm{sgn}(x)) ] & |x|> R- \sqrt{3}
    \end{cases},\\
     u''(x) & = \begin{cases}
        0 & |x| \leq R-\sqrt{3} \\ \frac{1}{6\sqrt{3}} [ 2  (R + 2 \sqrt{3}-|x|) - 4 (|x|-R+ \sqrt{3})  ] & |x|> R- \sqrt{3}
    \end{cases}
     = \begin{cases}
        0 & |x| \leq R-\sqrt{3} \\ \frac{1}{\sqrt{3}} [  R - |x| ] & |x|> R- \sqrt{3}
    \end{cases}.
\end{align*}
Now define $h_1,h_2: \Omega \rightarrow \mathbb{R}$ 
\begin{equation*}
    h_1(x) := R \chi_{(R-\sqrt{3},R)}(|x|), \quad h_2(x) := |x| \chi_{(R-\sqrt{3},R)}(|x|).
\end{equation*}
For any $\varphi \in C_0^\infty(\Omega)$ we  compute further
\begin{equation*}
    \int_\Omega h_1(x) \varphi''(x) \; \mathrm{d}x  = R\varphi'(-R+ \sqrt{3}) - R\varphi'(R - \sqrt{3})
\end{equation*}
and 
\begin{align*}
   & \int_\Omega h_2(x) \varphi''(x) \; \mathrm{d}x  = \int_{-R}^{-R+\sqrt{3}} -x\varphi''(x) \; \mathrm{d}x + \int_{R- \sqrt{3}}^R x\varphi''(x) \; \mathrm{d}x \\&  = -(-R+\sqrt{3})\varphi'(-R+ \sqrt{3}) - (R-\sqrt{3}) \varphi'(R - \sqrt{3}) + \int_{-R}^{-R+\sqrt{3}} \varphi'(x) \; \mathrm{d}x - \int_{R-\sqrt{3}}^R \varphi'(x) \; \mathrm{d}x
   \\ & = -(-R+\sqrt{3})\varphi'(-R+ \sqrt{3}) - (R-\sqrt{3}) \varphi'(R - \sqrt{3}) + \varphi(-R + \sqrt{3}) + \varphi(R-\sqrt{3}). 
    \end{align*}
Using the formula for $u''$ above we find 
\begin{align*}
    \int_\Omega u''(x) \varphi''(x) \; \mathrm{d}x  = (\varphi'(-R+\sqrt{3}) - \varphi'(R- \sqrt{3}) ) - \frac{1}{\sqrt{3}} (\varphi(R-\sqrt{3}) + \varphi(-R+\sqrt{3})).  
\end{align*}
The distribution $ \varphi \mapsto \frac{1}{\sqrt{3}} (\varphi(R-\sqrt{3}) + \varphi(-R+\sqrt{3}))$ can clearly be represented by a Radon measure on $\Omega$ since for each $\varphi \in C_0^\infty(\Omega)$ one has
\begin{equation*}
    \varphi(R-\sqrt{3}) + \varphi(-R+\sqrt{3}) = \int_\Omega \varphi \; \mathrm{d}(\delta_{R-\sqrt{3}} + \delta_{-R+\sqrt{3}}).
\end{equation*}
However, the distibution $\varphi \mapsto \varphi'(-R+\sqrt{3}) - \varphi'(R- \sqrt{3})$ is not represented by a Radon measure as the following lemma shows. In view of this the distribtion
\begin{equation*}
    u^{(4)} : \varphi \mapsto \int_\Omega u''(x) \varphi''(x) \; \mathrm{d}x
\end{equation*}
cannot be represented by a Radon measure. 
\begin{lemma}
    Let $\Omega = (-R,R)$ for any $R> 0$, $m \in \mathbb{N}$, $a_1,\ldots,a_m \in \Omega$ mutually distinct 
    and $c_1,\ldots,c_m \in \mathbb{R}$. If there exists a signed Radon measure $\mu$ on $\Omega$ such that the distribution $\varphi \mapsto \sum_{i = 1}^m c_i \varphi'(a_i)$ is represented by $\mu$, that is
    \begin{equation}\label{eq:Ableitungalsmass}
        \sum_{i = 1}^m c_i \varphi'(a_i) = \int_\Omega \varphi \; \mathrm{d}\mu \qquad \textrm{for all $\varphi \in C_0^\infty(\Omega)$,}
    \end{equation}
    then $c_1=...=c_m = 0$. 
\end{lemma}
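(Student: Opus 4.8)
The plan is to argue by contradiction, exploiting the fact that the distribution on the left-hand side has \emph{order exactly one} at each point $a_i$ with $c_i\neq 0$, whereas a (signed) Radon measure represents a distribution of order zero. To make this quantitative I would play off the size of a test function against the size of its derivative at a single point.

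Suppose, for contradiction, that \eqref{eq:Ableitungalsmass} holds and that $c_j\neq 0$ for some $j\in\{1,\dots,m\}$. Fix once and for all a function $\psi\in C_0^\infty(\mathbb{R})$ with $\operatorname{supp}\psi\subset(-1,1)$ and $\psi'(0)=1$ (such $\psi$ clearly exists). For $k\in\mathbb{N}$ large define
\begin{equation*}
    \varphi_k(x):=\tfrac{1}{k}\,\psi\bigl(k(x-a_j)\bigr).
\end{equation*}
Then $\varphi_k\in C_0^\infty(\Omega)$ for $k$ large, since $\operatorname{supp}\varphi_k\subset\bigl(a_j-\tfrac1k,a_j+\tfrac1k\bigr)\subset\Omega$ once $\tfrac1k<\operatorname{dist}(a_j,\partial\Omega)$. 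Moreover $\varphi_k'(x)=\psi'\bigl(k(x-a_j)\bigr)$, so $\varphi_k'(a_j)=\psi'(0)=1$, while for $i\neq j$ we have $\varphi_k'(a_i)=0$ as soon as $\tfrac1k<\min_{i\neq j}|a_i-a_j|$. Finally $\|\varphi_k\|_{L^\infty(\Omega)}=\tfrac1k\|\psi\|_{L^\infty(\mathbb{R})}\to 0$ as $k\to\infty$.

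Now evaluate both sides of \eqref{eq:Ableitungalsmass} at $\varphi=\varphi_k$ for $k$ large enough that the two smallness conditions above hold. The left-hand side equals $\sum_{i=1}^m c_i\varphi_k'(a_i)=c_j$, a fixed nonzero number independent of $k$. For the right-hand side, pick a compact neighbourhood $K\subset\Omega$ of $a_j$; since $\mu$ is a signed Radon measure, $|\mu|(K)<\infty$, and for $k$ large $\operatorname{supp}\varphi_k\subset K$, whence
\begin{equation*}
    \Bigl|\int_\Omega\varphi_k\,\mathrm d\mu\Bigr|\le\|\varphi_k\|_{L^\infty(\Omega)}\,|\mu|(K)\;\xrightarrow[k\to\infty]{}\;0.
\end{equation*}
Thus $c_j=\lim_{k\to\infty}\int_\Omega\varphi_k\,\mathrm d\mu=0$, contradicting $c_j\neq 0$. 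Hence $c_1=\dots=c_m=0$. The only point requiring a little care is the local finiteness $|\mu|(K)<\infty$, which is precisely the content of $\mu$ being Radon; the rest is a routine scaling of a fixed bump function, so no genuine obstacle arises.
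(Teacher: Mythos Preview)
Your proof is correct. It is the standard and cleanest way to see the result: the distribution $\sum_i c_i\delta'_{a_i}$ has order one at each $a_j$ with $c_j\neq 0$, while any signed Radon measure acts as a distribution of order zero, and your scaled bumps $\varphi_k$ make this discrepancy quantitative. All the details (support, derivative at $a_j$, local finiteness of $|\mu|$) are handled properly.

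The paper follows a different and more roundabout route. It first localises to a compact subinterval $\Omega'$ on which $\mu$ is finite, then uses Fubini to write $\int_{\Omega'}\varphi\,\mathrm{d}\mu=\int_{\Omega'} f(s)\varphi'(s)\,\mathrm{d}s$ with $f(s)=\mu((s,R-\delta))\in L^\infty(\Omega')$, effectively producing a bounded primitive of $\mu$. This turns \eqref{eq:Ableitungalsmass} into $\sum_i c_i\varphi'(a_i)=\int f\,\varphi'$, and after a standard ``subtract a fixed density'' trick it reduces further to $\sum_i c_i\psi(a_i)=\int g\,\psi$ for all $\psi$ with $g\in L^\infty$; testing on the gaps between the $a_i$ forces $g=0$ and then all $c_i=0$. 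Your scaling argument bypasses this entire reduction and gets to the contradiction in one step; the paper's approach, by contrast, makes explicit what the measure would have to look like (an $L^\infty$ density after one integration), which is structurally informative but not needed for the bare statement.
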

\begin{proof}Without loss of generality we may assume that $a_1<a_2<...<a_m$. 
    Suppose that $a_1,...,a_m \in \Omega$ and $c_1,...,c_m\in \mathbb{R}$ are chosen such that \eqref{eq:Ableitungalsmass} holds for a Radon measure $\mu$. Let $\delta > 0$ be such that  $a_1,...,a_m \in (-R+ \delta, R-\delta)$. Define $\Omega' := (-R + \delta ,R- \delta)$ and observe that $\mu$ is a finite signed measure on $\Omega'$. Then one can use Fubini's theorem as follows: For each $\varphi \in C_0^\infty(\Omega')$ one has 
    \begin{align*}
        \int_{\Omega'} \varphi \; \mathrm{d}\mu & = \int_{(-R+\delta,R-\delta)} \left( \int_{(-R+\delta,x)} \varphi'(s) \; \mathrm{d}s \right) \; \mathrm{d}\mu(x)  = \int_{(-R+\delta,R-\delta)} \left( \int_{(s,R-\delta)}  \; \mathrm{d}\mu(x) \right) \varphi'(s) \; \mathrm{d}s
         \\ & = \int_{(-R+\delta,R-\delta)} \mu((s,R-\delta)) \varphi'(s) \; \mathrm{d}s.
    \end{align*}
    This and \eqref{eq:Ableitungalsmass} imply that for each $\varphi \in C_0^\infty(\Omega')$ one has 
    \begin{equation}\label{eq:42}
        \sum_{i = 1}^m c_i \varphi'(a_i) = \int_{(-R+\delta,R-\delta)} \mu((s,R-\delta)) \varphi'(s) \; \mathrm{d}s = \int_{\Omega'} f(s) \varphi'(s) \; \mathrm{d}s,
    \end{equation}
    where $f(s) := \mu((s,R-\delta))$ defines a function in $L^\infty(\Omega').$
    Now choose any function $\eta \in C_0^\infty(\Omega')$ such that $\int_{\Omega'} \eta = 1$. Notice that for each $\psi \in C_0^\infty(\Omega')$ the function $\varphi: \Omega' \rightarrow \mathbb{R},$  $\varphi(x) := \int_{-R+\delta}^x \psi(s) \; \mathrm{d}s - \int_{-R + \delta}^x  \eta(s) \; \mathrm{d}s \left( \int_{\Omega'} \psi(y) \; \mathrm{d}y \right) $ lies in $C_0^\infty(\Omega')$. Using this as a test  function in \eqref{eq:42} we find for any $\psi \in C_0^\infty(\Omega')$
    \begin{equation*}
        \sum_{i = 1}^m c_i \left( \psi(a_i) - \eta(a_i) \int_{\Omega'} \psi(y) \; \mathrm{d}y \right) = \int_{\Omega'} f(s) \left( \psi(s) - \eta(s) \int_{\Omega'} \psi(y) \; \mathrm{d}y \right) \; \mathrm{d}s. 
    \end{equation*}
    Rearranging terms we find
    \begin{equation*}
        \sum_{i = 1}^m c_i \psi(a_i) - \int_{\Omega'} \left( \sum_{i = 1}^m c_i \eta(a_i) \right) \psi(y)  \; \mathrm{d}y = \int_{\Omega'} f(s) \psi(s) \; \mathrm{d}s - \int_{\Omega'} \left(  \int_{\Omega'} f(s) \eta(s) \; \mathrm{d}s \right) \psi(y) \; \mathrm{d}y. 
    \end{equation*}
    Defining $g(s) := f(s)  +  \sum_{i = 1}^m c_i \eta(a_i) - \int_{\Omega'} f(y) \eta(y) \; \mathrm{d}y $ (switching the names of the variables $y$ and $s$ in the last summand) we obtain that $g \in L^\infty(\Omega')$ and
    \begin{equation}\label{eq:45}
        \sum_{i = 1}^m c_i \psi(a_i) = \int_{\Omega'} g(s) \psi(s) \; \mathrm{d}s \qquad \textrm{for all $\psi \in C_0^\infty(\Omega')$.}
    \end{equation}
    Considering test functions $\psi \in C_0^\infty((-R+\delta,a_1))$ we infer that $g \vert_{(-R+\delta,a_1)} = 0$ almost everywhere. Next, for $i=1,...,m-1$, using test functions $\psi \in C_0^\infty((a_i,a_{i+1}))$ yields $g \vert_{(a_i,a_{i+1})} = 0$ almost everywhere. Finally, considering  test functions $\psi \in C_0^\infty((a_m,R-\delta))$ we infer that $g \vert_{(a_m,R-\delta)} = 0$ almost everywhere. Since $\{a_1,...,a_m \}$ is finite we conclude that $g \equiv 0$ almost everywhere and in view of \eqref{eq:45}  we find 
    \begin{equation*}
        \sum_{i = 1}^m c_i \psi(a_i) = 0 \qquad \textrm{for all $\psi \in C_0^\infty(\Omega')$}. 
    \end{equation*}
    As a result one readily obtains $c_1=...=c_m = 0$ as stated. 
\end{proof}

\subsection{Regularity of measure-valued solutions}

\begin{lemma}
    Suppose that $\Omega \subset \mathbb{R}^n$ is a smooth domain and $v \in H^2(\Omega)$ satisfies \eqref{eq:measvaleq}. Then $v \in W^{3,q}_{loc}(\Omega)$ for any $ q \in (1,\frac{n}{n-1})$. 
\end{lemma}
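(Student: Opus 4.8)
The plan is to read \eqref{eq:measvaleq} as a biharmonic equation with measure datum, $\Delta^2 v = \mu$, and to strip off one Laplacian. Set $w := \Delta v \in L^2(\Omega)$. Since $\Delta$ is formally self-adjoint, \eqref{eq:measvaleq} says exactly that $\Delta w = \mu$ in the sense of distributions on $\Omega$. It therefore suffices to establish two statements: \emph{(a)} $w \in W^{1,q}_{loc}(\Omega)$ for every $q \in (1,\tfrac{n}{n-1})$; and \emph{(b)} if $\Delta v = w$ with $w \in W^{1,q}_{loc}(\Omega)$ and $1<q<\infty$, then $v \in W^{3,q}_{loc}(\Omega)$.

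Step (b) is routine elliptic regularity: for each $i$, the function $\partial_i v$ solves $\Delta(\partial_i v) = \partial_i w \in L^q_{loc}(\Omega)$, so interior Calderón--Zygmund estimates for the Laplacian give $\partial_i v \in W^{2,q}_{loc}(\Omega)$, hence $v \in W^{3,q}_{loc}(\Omega)$; alternatively, one applies the interior $W^{3,q}$-estimate directly to $\Delta v = w$. This uses only $1<q<\infty$, which is compatible with the claimed range.

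The substance is Step (a): interior regularity for the Poisson equation with a signed, locally finite Radon measure on the right-hand side. Fix a ball $B \Subset \Omega$, so that $\mu_B$, the restriction of $\mu$ to $B$, is a finite signed measure. Let $\Phi$ be the Newtonian kernel, normalized so that $-\Delta\Phi = \delta_0$, and put $N := \Phi * \mu_B$. Since $\Phi \in L^1_{loc}(\mathbb{R}^n)$ (it grows like $|x|^{2-n}$ for $n\ge 3$ and like $|\log|x||$ for $n=2$), Tonelli's theorem gives $N \in L^1_{loc}(\mathbb{R}^n)$; and since $|\nabla\Phi(x)|\lesssim |x|^{1-n}$ lies in $L^q_{loc}(\mathbb{R}^n)$ precisely when $q(n-1)<n$, i.e. $q<\tfrac{n}{n-1}$, a standard convolution estimate (Minkowski's integral inequality, applied to the finite measure $|\mu_B|$) yields $N \in W^{1,q}_{loc}(\mathbb{R}^n)$ for all such $q$, with $\Delta N = -\mu_B$ distributionally. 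Now on a smaller ball $B' \Subset B$ the function $w + N$ satisfies $\Delta(w+N) = \mu - \mu_B = 0$ in $\mathcal{D}'(B')$ and lies in $L^1(B')$; Weyl's lemma makes it harmonic, in particular $C^\infty$, on $B'$. Hence $w = (w+N) - N \in W^{1,q}_{loc}(B')$, and covering $\Omega$ by such balls gives $w \in W^{1,q}_{loc}(\Omega)$, as desired.

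I expect the main obstacle to be the careful bookkeeping in Step (a): reducing the merely locally finite measure $\mu$ to a genuinely finite one by localization, and verifying rigorously that $\Delta N = -\mu_B$ in the distributional sense (the usual argument mollifies $\mu_B$, uses the identity $-\Delta\Phi=\delta_0$ for the smoothed data, and passes to the limit via weak-$\ast$ convergence together with the $L^1_{loc}$ and $L^q_{loc}$ bounds on $\Phi$ and $\nabla\Phi$). It is worth stressing that the threshold $q<\tfrac{n}{n-1}$ is not an artifact of the method but is forced by the local integrability of $\nabla\Phi$, and is already sharp for $\mu=\delta_0$. Everything else --- the self-adjointness reduction, Weyl's lemma, and the Calderón--Zygmund step --- is standard.
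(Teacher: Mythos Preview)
Your proof is correct and follows essentially the same route as the paper: reduce to $\Delta w=\mu$ for $w=\Delta v$, produce a local $W^{1,q}$ solution of the Poisson equation with measure datum, subtract to obtain a harmonic remainder via Weyl's lemma, and finish with Calder\'on--Zygmund. The only difference is that the paper quotes \cite[Proposition~5.1]{Ponce} to obtain a $W^{1,q}_0$ solution on a smooth subdomain, whereas you construct it explicitly as the Newtonian potential $\Phi*\mu_B$; your version is slightly more self-contained, the paper's slightly shorter.
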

\begin{proof}
    Let $\Omega' \subset\subset \Omega$ be a smooth subdomain and fix $q \in (1,\frac{n}{n-1})$. Then $\mu$ is a finite signed measure on $\Omega'$. By \cite[Proposition 5.1]{Ponce} there exists some $w \in W^{1,q}_0(\Omega')$ such that 
    \begin{equation*}
        \int_{\Omega'} \nabla w \cdot \nabla \varphi \; \mathrm{d}x = \int_{\Omega'} \varphi \; \mathrm{d}\mu \qquad \textrm{for all $\varphi \in C_0^\infty(\Omega')$.}
    \end{equation*}
    In particular, integration by parts implies 
    \begin{equation*}
       \int_{\Omega'} w  \Delta \varphi \; \mathrm{d}x = - \int_{\Omega'} \varphi \; \mathrm{d}\mu \qquad \textrm{for all $\varphi \in C_0^\infty(\Omega')$.}
    \end{equation*}
   Notice that by \eqref{eq:measvaleq} we have 
    \begin{equation*}
    \int_{\Omega'} \Delta v \Delta \varphi \; \mathrm{d}x = \int_{\Omega'} \varphi \; \mathrm{d}\mu \qquad \textrm{for all $\varphi \in C_0^\infty(\Omega')$},
\end{equation*}
Adding the previous two equations we find 
\begin{equation*}
    \int_{\Omega'} (\Delta v + w ) \Delta \varphi \; \mathrm{d}x = 0 \qquad \textrm{for all $\varphi \in C_0^\infty(\Omega')$},
\end{equation*}
that is, $\Delta v+ w$ is harmonic on $\Omega'$ and therefore lies in  $C^\infty(\Omega')$. Since $w \in W^{1,q}(\Omega')$ we infer that $\Delta v \in W^{1,q}_{loc}(\Omega')$. Elliptic regularity yields $v \in W^{3,q}_{loc}(\Omega')$. Since $\Omega$ can be exhausted by domains with smooth boundary we have $v \in W^{3,q}_{loc}(\Omega)$. 
\end{proof}
Now we show that our global minimizer found defined in \eqref{eq:2} does not lie in $W^{3,q}_{loc}(\Omega)$ for any $q \geq 1$.
\begin{lemma}
    Let $u$ be defined as in \eqref{eq:2} and $\Omega =B:=B_1(0)$. Then $u \in W^{2,\infty}(\Omega)$ but $u \not \in W^{3,q}_{loc}(\Omega)$ for any $q \geq 1$. 
\end{lemma}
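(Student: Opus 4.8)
The plan is to compute the weak derivatives of $u$ up to second order, which yields the positive statement $u\in W^{2,\infty}(\Omega)$ at once, and then to show that the third distributional derivative $\partial_{nnn}u$ equals the surface measure carried by the hyperplane $\{x_n=0\}$, which is not locally integrable; this rules out $u\in W^{3,q}_{loc}(\Omega)$ for every $q\ge1$.

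For the $W^{2,\infty}$-part, I would first note that $u\in C^1(\mathbb{R}^n)$ with $\nabla u(x)=x_n\chi_{(0,\infty)}(x_n)\,e_n$, which is globally Lipschitz and, on the bounded set $\Omega$, bounded. Differentiating once more, the scalar map $t\mapsto t\chi_{(0,\infty)}(t)$ is Lipschitz with weak derivative the bounded function $\chi_{(0,\infty)}(t)$; consequently the only nonzero weak second derivative of $u$ is $\partial_{nn}u=\chi_{\{x_n>0\}}\in L^\infty(\Omega)$, all tangential and mixed second derivatives vanishing identically. Hence $u,\nabla u,D^2u\in L^\infty(\Omega)$, i.e.\ $u\in W^{2,\infty}(\Omega)$. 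The point worth stressing is that the continuity of $\nabla u$ across $\{x_n=0\}$ --- the two branches of $u$ have matching values \emph{and} normal derivatives there --- is precisely what prevents a distributional jump term at second order.

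For the negative part I would argue by contradiction. Assume $u\in W^{3,q}_{loc}(\Omega)$ for some $q\ge1$; since $\Omega$ is bounded this forces in particular $\partial_n(\partial_{nn}u)=\partial_n\chi_{\{x_n>0\}}\in L^1_{loc}(\Omega)$, say represented by $g\in L^1_{loc}(\Omega)$. Computing this distribution directly, for $\varphi\in C_0^\infty(\Omega)$, by integrating first in the $x_n$-variable and using that $\varphi$ has compact support,
\begin{equation*}
\langle\partial_n\chi_{\{x_n>0\}},\varphi\rangle=-\int_{\Omega\cap\{x_n>0\}}\partial_n\varphi\,\mathrm{d}x=\int_{\Omega\cap\{x_n=0\}}\varphi\,\mathrm{d}\mathcal{H}^{n-1}.
\end{equation*}
Thus $g$ would represent the surface measure $\mathcal{H}^{n-1}$ restricted to $\Omega\cap\{x_n=0\}$, which is impossible. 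To see this, fix $r>0$ with $B_r(0)\Subset\Omega$ and test with functions $\varphi_k\in C_0^\infty(B_r(0))$ satisfying $0\le\varphi_k\le1$, $\varphi_k\equiv1$ on $B_{r/2}(0)\cap\{|x_n|\le1/k\}$ and $\operatorname{supp}\varphi_k\subset B_r(0)\cap\{|x_n|<2/k\}$; then the right-hand side above is bounded below by the fixed positive constant $\mathcal{H}^{n-1}\bigl(B_{r/2}(0)\cap\{x_n=0\}\bigr)$, whereas $\bigl|\int_\Omega g\,\varphi_k\,\mathrm{d}x\bigr|\le\int_{B_r(0)\cap\{|x_n|<2/k\}}|g|\,\mathrm{d}x\to0$ as $k\to\infty$ by dominated convergence. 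Equivalently, a distribution given by a Radon measure lies in $L^1_{loc}$ only if that measure is absolutely continuous with respect to Lebesgue measure, which the surface measure on $\{x_n=0\}$ is not. Either way we reach a contradiction, so $u\notin W^{3,q}_{loc}(\Omega)$ for every $q\ge1$.

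Everything here is routine; the one step that wants a little care is the last --- certifying that the hyperplane surface measure is not an $L^1_{loc}$-function --- and the concentrating test functions (or the Radon--Nikodym remark) settle it. I anticipate no real obstacle.
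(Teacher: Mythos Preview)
Your argument is correct. The $W^{2,\infty}$ part matches the paper's almost verbatim. For the failure of $W^{3,q}_{loc}$ you take a different but equally valid route: you compute the distribution $\partial_n\chi_{\{x_n>0\}}$ explicitly as the surface measure $\mathcal{H}^{n-1}\mres\{x_n=0\}$ and then exclude its representability by an $L^1_{loc}$ density via concentrating test functions (or the Radon--Nikodym observation). The paper instead invokes the absolute-continuity-on-lines characterisation of Sobolev functions \cite[Theorem 4.21]{EvansGariepy}: since $\chi_{B^+}$ fails to have an absolutely continuous representative along the lines $t\mapsto (x',t)$ for every $x'$, it cannot belong to $W^{1,q}_{loc}(\Omega)$. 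Your approach is more self-contained and makes the obstruction --- a singular measure appearing as a third derivative --- completely explicit, which ties in nicely with the theme of Section~\ref{sec:NoMeasValuedInequality}; the paper's approach is terser but relies on an external structural theorem. Either settles the claim.
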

\begin{proof}
    One readily checks that $u \in W^{2,\infty}(\Omega)$ and for a.e. $x \in B_1(0)$ we have
    \begin{equation*}
        D^2 u (x) = \mathrm{diag}(0,...,0,\chi_{B^+}(x)).
    \end{equation*}
    Assume now that $u \in W^{3,q}_{loc}(\Omega)$ for some $q \geq 1$. Then, in particular $\partial_{nn} u = \chi_{B^+} \in W^{1,q}_{loc}(\Omega)$ for some $q \geq 1$. This cannot be true (e.g. since Sobolev functions must be continuous along almost every ray $\{x_1 = s \} \cap \Omega$, cf. \cite[Theorem 4.21]{EvansGariepy}). 
\end{proof}

In \cite[Example 2 and 4]{GMMathAnn} it was shown for $n=2$ that for a suitably small constant boundary datum $u_0 > 0$ minimizers $u$ of $\mathcal{F}(\cdot, B_1(0))$ under Dirichlet (respectively Navier) boundary conditions are unique and satisfy for almost every $x \in B_1(0)$
\begin{equation}\label{eq:52}
    \Delta u(x) = \begin{cases} 0 & |x| < \rho  \\
        C\log|x| + D  & |x| > \rho 
    \end{cases},
\end{equation}
for some $C,D \in \mathbb{R}$ and $\rho \in (0,1)$. 
Moreover, \cite[Remark 9 and Remark 11]{GMMathAnn} show that $\Delta u \not \in C^0(B_1(0)),$ that is $C \log(\rho)+ D \neq 0$. The following lemma shows that in this case $u \not \in W^{3,1}_{loc}(B_1(0))$.

\begin{lemma}
    Suppose that $u \in H^2(B_1(0))$ satisfies \eqref{eq:52} for some $C,D \in \mathbb{R}$ and $\rho \in (0,1)$. Then $u \in W^{3,1}_{loc}(B_1(0))$ if and only if $C \log(\rho) + D= 0$.
\end{lemma}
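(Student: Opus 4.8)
The plan is to reduce the statement to a one-dimensional computation about the regularity of $\Delta u$ and then invoke elliptic regularity in both directions. First I would observe that the ``if'' direction is essentially classical: if $C\log\rho + D = 0$, then the function $x \mapsto \Delta u(x)$ given by \eqref{eq:52} is continuous on $B_1(0)$ (it vanishes on $\partial B_\rho(0)$ from the outside and is identically $0$ inside), and in fact lies in $W^{1,q}_{loc}(B_1(0))$ for $q$ slightly below $\frac{n}{n-1}$ (here $n = 2$), since $\nabla(C\log|x|+D) = C x/|x|^2$ is in $L^q_{loc}$ near the origin for such $q$, and the radial gluing across $\{|x| = \rho\}$ introduces no jump once the trace condition $C\log\rho+D=0$ holds (the radial derivative $C/|x|$ has a jump, but no $\mathcal{H}^{n-1}$-singular part is created because the \emph{function} is continuous there — the jump in the derivative only costs an extra $L^\infty$ term on an annulus, not a measure). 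Hence $\Delta u \in W^{1,1}_{loc}$, and by interior elliptic regularity for the Poisson equation $\Delta(\Delta u) \in L^1_{loc} \Rightarrow u \in W^{3,1}_{loc}$.

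For the ``only if'' direction, I would argue by contradiction: suppose $C\log\rho + D \neq 0$ but $u \in W^{3,1}_{loc}(B_1(0))$. Then $\Delta u \in W^{1,1}_{loc}(B_1(0))$. Now $\Delta u$ is radial, and its radial profile $g(r) := C\log r + D$ for $r > \rho$, $g(r) := 0$ for $r < \rho$, has a jump discontinuity of size $C\log\rho + D \neq 0$ at $r = \rho$. A radial $W^{1,1}_{loc}$ function on $B_1(0) \subset \mathbb{R}^2$ must have an absolutely continuous radial profile on compact subintervals of $(0,1)$ — this follows because the distributional gradient of a radial $W^{1,1}_{loc}$ function is $g'(|x|)\,x/|x|$ with $g' \in L^1_{loc}((0,1), r\,\mathrm{d}r)$, and on an annulus $\{\rho/2 < |x| < 2\rho\}$ bounded away from the origin this forces $g$ to be absolutely continuous there, contradicting the jump at $r = \rho$. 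Equivalently and more directly: testing the equation $\int_{B_1(0)} \nabla(\Delta u)\cdot \nabla\varphi\,\mathrm{d}x = -\int \Delta u\,\Delta\varphi\,\mathrm{d}x$ against a $\varphi$ concentrated near $\{|x|=\rho\}$ reveals a surface term $(C\log\rho+D)\int_{\partial B_\rho}\partial_\nu\varphi\,\mathrm{d}\mathcal{H}^{n-1}$ that cannot be absorbed by an $L^1$ density. This shows $\Delta u \notin W^{1,1}_{loc}$, hence $u \notin W^{3,1}_{loc}$.

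The cleanest way to package both directions uniformly is to compute the distributional Laplacian of $\Delta u$ explicitly. Writing $h(x) = \Delta u(x)$ as in \eqref{eq:52}, one gets in the sense of distributions on $B_1(0)$
\begin{equation*}
    \Delta h = (C\log\rho + D)\,\mathcal{H}^{n-1}\!\llcorner \partial B_\rho(0) + (\text{an } L^1_{loc} \text{ part}),
\end{equation*}
where the $L^1_{loc}$ part comes from $\Delta(C\log|x|+D)$ on the annulus $\rho < |x| < 1$ (which, crucially, is $0$ in dimension $n=2$ since $\log|x|$ is harmonic away from the origin, so the $L^1_{loc}$ part is in fact just the contribution from the outer behaviour, and can be checked to be genuinely integrable) plus the singular surface layer from the derivative jump. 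Then $\Delta h$ is an $L^1$ function if and only if the $\mathcal{H}^{n-1}$-surface term vanishes, i.e. if and only if $C\log\rho + D = 0$; and by elliptic regularity $u \in W^{3,1}_{loc}$ iff $h = \Delta u \in W^{1,1}_{loc}$ iff $\Delta h \in L^1_{loc}$ (using that $W^{1,1}_{loc} \Leftrightarrow$ distributional gradient in $L^1_{loc}$, and for the radial function $h$ this is equivalent to $\Delta h$ having no singular part).

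The main obstacle I anticipate is making the claim ``$\Delta h \in L^1_{loc} \Leftrightarrow h \in W^{1,1}_{loc}$'' fully rigorous for this specific $h$: the implication $W^{1,1}_{loc} \Rightarrow \Delta h \in L^1_{loc}$ is false in general, so one must use the special radial structure. The honest route is to show directly that $\nabla h$, computed as a distribution, equals $g'(|x|)\frac{x}{|x|}$ plus a surface-layer term $(C\log\rho+D)\frac{x}{|x|}\mathcal{H}^{n-1}\llcorner\partial B_\rho$, and that this is an $L^1_{loc}$ vector field precisely when the surface term disappears. Since $g'(r) = C/r$ is in $L^1_{loc}((0,1))$ but the weight $r^{n-1} = r$ makes $\int_{B_{1/2}} |C x/|x|^2|\,\mathrm{d}x = C\int_0^{1/2} r^{-1}\cdot r\,\mathrm{d}r < \infty$ in $n=2$, the interior part is always fine and everything hinges on the jump at $\rho$ — which is the assertion.
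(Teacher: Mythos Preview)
Your ``only if'' direction is correct and matches the paper: both arguments reduce to the ACL characterisation, observing that $u\in W^{3,1}_{loc}$ forces $\Delta u\in W^{1,1}_{loc}$, hence $\Delta u$ is (absolutely) continuous along almost every ray, which is incompatible with the jump $C\log\rho+D\neq 0$ at $r=\rho$. The paper phrases this in one sentence via \cite[Theorem 4.21]{EvansGariepy}; your version is just more expansive.

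The ``if'' direction, however, has a genuine gap. You establish (or try to establish) that $\Delta u\in W^{1,1}_{loc}$ and then invoke ``interior elliptic regularity'' to conclude $u\in W^{3,1}_{loc}$. This step fails: Calder\'on--Zygmund estimates are false for $p=1$, so $\Delta u\in W^{1,1}_{loc}$ does \emph{not} give $u\in W^{3,1}_{loc}$ in general. Your ``unified'' route via $\Delta h=\Delta(\Delta u)$ is also incorrect: a direct computation shows that even when $C\log\rho+D=0$ one has
\[
\Delta(\Delta u)=\tfrac{C}{\rho}\,\mathcal{H}^{1}\llcorner\partial B_\rho(0),
\]
which is a surface measure, not an $L^1$ function (and when $C\log\rho+D\neq 0$ there is an additional first-order distribution $-(C\log\rho+D)\partial_\nu\,\mathcal{H}^1\llcorner\partial B_\rho$, not merely the measure term you wrote). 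So the criterion ``$\Delta h\in L^1_{loc}$ iff $C\log\rho+D=0$'' is false.

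There are two easy fixes. First, the observation you almost made: when $C\log\rho+D=0$, the distributional gradient $\nabla(\Delta u)=\tfrac{Cx}{|x|^2}\chi_{\{|x|>\rho\}}$ is actually \emph{bounded} (the origin lies inside $B_\rho$ where $\Delta u\equiv 0$, so your discussion of $L^q$ for $q<n/(n-1)$ near $0$ is irrelevant). Hence $\Delta u\in W^{1,q}_{loc}$ for every $q<\infty$, and now standard $L^q$ elliptic regularity with $q>1$ gives $u\in W^{3,q}_{loc}\subset W^{3,1}_{loc}$. Second---and this is what the paper has in mind when it says ``easy computation''---one may simply write $u$ as a harmonic function plus the explicit radial solution of $\Delta U=g$, check that $U,U',U''$ are all continuous across $r=\rho$ when $C\log\rho+D=0$, and that $U'''$ is piecewise bounded; hence $u\in W^{3,\infty}_{loc}$.
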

\begin{proof}
   It is an immediate consequence of \cite[Theorem 4.21]{EvansGariepy} that Sobolev functions must also be continuous along almost every ray $\{ r \sigma : r \in (0,1) \}$ $(\sigma \in \partial B_1(0))$. If we pick one such ray and use \eqref{eq:52} we infer that $C \log \rho + D = 0$. The opposite direction is an easy computation and left to the reader.  
\end{proof}



\end{document}